%% file: ex_article.tex
\documentclass[a4paper,fleqn]{els-cas-templates/cas-sc}

\usepackage[numbers,sort&compress]{natbib}

\input{ex_shared}

\input{commands}

\usepackage{caption}
\usepackage{stmaryrd}
\usepackage{bm}
\usepackage{tikz}
\renewcommand{\figurename}{Figure}

\begin{document}
\let\WriteBookmarks\relax
\def\floatpagepagefraction{1}
\def\textpagefraction{.001}
\ExplSyntaxOn
\bool_gset_true:N \g_stm_nologo_bool
\ExplSyntaxOff

\shorttitle{CR FEM for Time-dependent transport equations}
\shortauthors{S. Mao, M. Zhang}

\title[mode=title]{A Second-Order Maximum-Principle-Preserving Crouzeix-Raviart Finite Element Method for Time-dependent Transport Equation}

\author[1]{Shipeng Mao}
\fnmark[1]
\ead{maosp@lsec.cc.ac.cn}

\author[1]{Mingyang Zhang}
\fnmark[1]
\ead{zhangmingyang@lsec.cc.ac.cn}

\affiliation[1]{organization={State Key Laboratory of Mathematical Sciences, Academy of Mathematics and Systems Science, Chinese Academy of Sciences; School of Mathematical Sciences, University of Chinese Academy of Sciences},
                addressline={Zhongguancun East Road 55},
                city={Beijing},
                postcode={100190},
                country={China}}

\begin{abstract}
	In this paper, we construct an explicit, second-order, and maximum-principle-preserving Crouzeix-Raviart (CR) finite element method for two-dimensional time-dependent transport equation. The key observation is that the mass matrix  of the CR element is with diagonal structure, which allows us to avoid the need to solve a large linear system for each time step and help to construct a low-order scheme that preserves the maximum principle in a simple way. We first introduce low-order schemes based on minimum and bilinear viscosities, and then recover second-order accuracy by means of greedy and flux-corrected transport viscosities. For inflow boundary conditions, we further design a modified FCT limiter. In addition, we propose a simple reconstruction based on Wachspress coordinates to obtain a continuous piecewise linear approximation on the $\frac{h}2$-mesh that satisfies the maximum principle on the whole domain. Under divergence-free velocity fields, the proposed schemes are conservative. Numerical experiments on both smooth and discontinuous test cases, with both solenoidal and non-solenoidal velocity fields, confirm the accuracy and robustness of the proposed schemes.
\end{abstract}

\begin{keywords}
Crouzeix--Raviart finite element \sep transport equation \sep discrete maximum principle \sep flux-corrected transport \sep Wachspress coordinates
\end{keywords}

\MSC{65M60, 65M12, 35L65}

\maketitle

\section{Introduction}
The time-dependent transport equation encodes directional advection and hyperbolic wave phenomena that arise in acoustics, aerodynamics, and other application areas where information propagates along characteristics.
In applications, low-regularity or incompatible initial-boundary data can propagate along characteristics, producing steep profiles and weak singular features in the solution. Purely centered or unstabilized discretizations are then prone to spurious oscillations and to pronounced numerical dispersion or dissipation \cite{Johnson1984}.
Godunov's seminal construction \cite{MR119433} advances the solution through local Riemann problems with upwind piecewise-constant Godunov fluxes, a pattern central to shock capturing and to exact/approximate Riemann-solver-based discretizations of compressible transport.
By way of orientation rather than an exhaustive survey, finite volume formulations may incorporate flux and slope limiters \cite{MR1388492}, continuous finite elements often use streamline upwind/Petrov--Galerkin (SUPG) and related Petrov--Galerkin modifications, crosswind dissipation and discontinuity capturing, or spurious oscillation at layers diminishing (SOLD) strategies \cite{MR679322,MR571681,MR1256324,John2007,John2008}, discontinuous Galerkin (DG) methods frequently stabilize advection through numerical fluxes such as upwind or monotone choices \cite{Meng2016,Xu2019}, and weak Galerkin schemes illustrate weak-gradient advection representations together with edge-based stabilizers \cite{Zhai2018, ALTaweel2021}.
Beyond stability, an important property of transport schemes is to respect maximum principles (or, more generally, invariant domains) so that undershoots and overshoots cannot produce unphysical states.
Godunov's classical theorem \cite{MR119433} shows that linear monotone schemes are at most first-order accurate, so approximations that suppress spurious extrema while seeking higher resolution necessarily rely on nonlinear stabilization mechanisms.
Recent designs therefore combine flux-corrected transport (FCT) corrections with residual-based dissipation to algebraically enforce discrete maximum principles \cite{Hajduk2020,MR4097213}.

The goal of this paper is to  develop an explicit second-order Crouzeix--Raviart (CR) finite element method for two-dimensional time-dependent transport equation that preserves a discrete maximum principle.
Introduced in 1973, the CR element is a nonconforming piecewise-linear space on simplices  with degrees of freedom tied to edge midpoints and enforced 
continuity in an integral (average) sense along mesh faces \cite{MR343661}.
It has attracted considerable attention within the finite element community, notably in inf--sup stable lowest-order mixed discretizations of incompressible flow \cite{MR1721268,MR1284845, MR3194820, MR2802555, MR3010181}, in locking-robust elasticity \cite{MR1182731,MR1140646, MR1202659} and in compressible fluid models \cite{MR2501053, MR3117509, MR3563278, MR4344597}.
For further background on CR elements and related extensions, see Brenner's survey article \cite{MR3312124}.
Motivations behind this spatial discretization for the transport equation stem from the following facts.
In explicit finite element methods for time-dependent transport problems, each time step typically requires a mass-matrix linear solve. For continuous $P_1$ finite elements, mass lumping is therefore widely used to obtain a diagonal mass matrix, reducing the cost of these solves and facilitating maximum-principle-preserving formulations \cite{MR3171280,MR4050715}. However, lumping introduces numerical dispersion \cite{MR3738312}, which can be particularly harmful for transport-like equations with nonsmooth initial data.
We found that the Crouzeix--Raviart mass matrix is naturally diagonal on any two-dimensional simplicial mesh, which eliminates the need for mass lumping and simplifies the design of maximum-principle-preserving schemes.
The semidiscrete advection operator also uses a sparser stencil than continuous $P_1$ at comparable order, weakening neighbor coupling and reducing halo traffic in parallel implementations.
Compared with piecewise-linear DG on the same triangulation, CR uses far fewer global unknowns, since DG stores three independent $P_1$ coefficients on each triangle whereas CR stores only one value per edge.
The smaller global dimension shortens the discrete state vector, saves memory for the solution and auxiliary data structures, and lowers the arithmetic work per explicit stage on a fixed mesh.
Moreover, the CR finite element space possesses an intermediate level of regularity, with smoothness lying between $L^2(\Omega)$ and $H^1(\Omega)$, and can approximate functions that do not belong to $H^1(\Omega)$.
With these features, the CR element may provide a desirable balance between continuous and discontinuous finite element methods for the transport equation, making its study worthwhile.

In this paper, we firstly introduce two types of viscosity to construct low-order schemes that preserve a local maximum principle at the degree-of-freedom (DOF) level, ensuring each DOF remains bounded by the extrema of its neighboring DOFs from the previous time level. The two low-order viscosities are as follows:
\begin{enumerate}
	\item Minimum viscosity: minimizes the entries of the viscosity matrix while maintaining essential numerical properties, including symmetry and conservation.
	\item Bilinear viscosity: formulated as a bilinear form on the CR basis functions, sharing conceptual similarities with the tensor-valued viscosity proposed in \cite{MR3171280}.
\end{enumerate}
To recover higher-order accuracy, we then incorporate two viscosity reduction mechanisms following \cite{MR3738312}:
\begin{enumerate}
	\item Greedy viscosity: applies an element-wise scaling factor between 0 and 1 to the low-order viscosity matrix.
	\item Flux corrected transport (FCT) viscosity: derived from the flux correction technique \cite{MR1486270,MR534786}, this limiter uses the maximum-preserving low-order solution to correct the high-order approximation.
\end{enumerate}
We prove that both greedy and FCT viscosities preserve the local maximum principle at the DOFs. By appropriately combining low- and high-order viscosities, we obtain schemes that are second-order accurate for smooth solutions. Under a divergence-free velocity field, all the methods are conservative. To handle inflow boundary conditions, we further construct a modified FCT limiter, which preserves the maximum principle and retains second-order accuracy in our numerical tests.
The resulting CR solution ensures that all degrees of freedom stay within the bounds of the initial and inflow data. To obtain a bound-preserving approximation on the whole domain, we employ Wachspress coordinates \cite{MR426460,MR3218572} to reconstruct a Lagrange $P_1$ function on the refined mesh obtained by subdividing each element at edge midpoints into four sub-triangles. This reconstruction satisfies the maximum principle throughout the domain, retains second-order accuracy, and has its $L^2$ norm controlled by that of the original CR solution. Numerical results in this paper confirm the accuracy and robustness of the proposed method and also indicate that, in challenging transport regimes, the CR discretization can exhibit smaller dispersion errors than the continuous $P_1$ method.

The paper is organized as follows. \Cref{sec:pre} introduces the model problem and notation. In \Cref{sec:crfem}, we present the CR method, including its weak and matrix formulations and the temporal discretization. \Cref{sec:maximumprinciple} develops the low- and high-order viscosities and introduces a limiting technique for inflow boundary conditions. \Cref{sec:postprocess} presents the CR reconstruction and verifies its second-order accuracy. \Cref{sec:numericaltests} provides numerical validation, and \Cref{sec:conclusions} concludes the paper.

\section{Preliminaries}
\label{sec:pre}
\subsection{Formulation of the problem}
Let $\Omega$ be an open polygonal domain in $\mathbb{R}^2$.  We consider the following time-dependent transport equation
\begin{equation}
	\begin{cases}
		\p_t u+\bm{\beta}\cdot\nabla u=0,\quad (\mathbf{x},t)\in\Omega\times \R_+,\\
		u(\mathbf{x},0)=u_0(\mathbf{x}),\quad \mathbf{x}\in\Omega,\\
		u|_{\p\Omega^{-}} = u_{\text{in}}(\mathbf{x},t),\quad (\mathbf{x},t)\in\p\Omega^{-}\times\R_+,
	\end{cases}
	\label{model}
\end{equation}
where $\bm{\beta}\in \mathbf{L}^{\infty}(\Omega)$ is the velocity field, and the inflow boundary is defined by $\partial \Omega^- := \{\mathbf{x} \in \partial \Omega : \bm{\beta} \cdot \mathbf{n} < 0\}$ with $\mathbf{n}(\mathbf{x})$ the outward unit normal. The initial data $u_0 \in L^\infty(\Omega)$ and inflow data $u_{\text{in}} \in L^{\infty}(\partial \Omega^-)$. 
The solution $u$ is bounded by the extrema of the initial and inflow data, i.e., $\min(\inf_{\Omega}u_0,\inf_{\p\Omega^-}u_{\text{in}})\le u(\mathbf{x},t)\le \max(\sup_{\Omega}u_0,\sup_{\p\Omega^-}u_{\text{in}})$ for any $\mathbf{x}\in\Omega$ and $t>0$. Moreover, if 
$\bm{\beta}$ is also divergence-free, then the flow satisfies the conservation law, i.e., $\int_{\Omega}u(\mathbf{x},t)\mathrm{d}\mathbf{x}$ is constant.

\subsection{Notations}
Let $\{\mathcal{K}_h\}_{h>0}$ denote a family of triangular meshes of the polygonal domain $\Omega$, assumed to be shape-regular in the sense of Ciarlet and without hanging nodes. Let $\hat{K}$ denote the reference triangle, taken to be equilateral with unit side length. The affine mapping between $\hat{K}$ and an arbitrary trangle $K\in \mathcal{K}_h$ is denoted by $\Phi_K:\hat{K}\to K$ which is obviously affine. Let $\mathcal{F}_h$ be the set of all edges of the mesh $\mathcal{K}_h$, and let $\mathcal{F}_h^{\mathrm{int}}\subset \mathcal{F}_h$ denote the interior edges. The set of edge midpoints is denoted by $\{\mathbf{x}_i\}_{i=1}^N$, where $N$ is the total number of edges. For each triangle $K \in \mathcal{K}_h$, let $h_K$ be the diameter of its circumscribed circle and define  $h = \max_{K\in\mathcal{K}_h} h_K$.

The Crouzeix-Raviart finite element space is defined as
\begin{equation}
	\mathcal{V}_h =\{v_h\in P(\mathcal{K}_h):\int_F\llbracket v_h\rrbracket\mathrm{d}s=0,\forall\ F\in\mathcal{F}_h^{\mathrm{int}}\},
\end{equation}
where $P(\mathcal{K}_h): = \{v_h\in L^1(\Omega):v_h|_K\circ \Phi_K\in \hat{P}_1,\forall\ K\in\mathcal{K}_h\}$, $\hat{P}_1$ is the space of linear functions on the reference element $\hat{K}$, and $\llbracket v_h\rrbracket:= v_h|_F^+-v_h|_F^-$ is the jump of $v_h$ across an interior edge with
$v_h|_F^-:=\lim_{\epsilon\rightarrow 0^+}v_h(\mathbf{x}-\epsilon\mathbf{n}_F)$, $v_h|_F^+:=\lim_{\epsilon\rightarrow 0^+}v_h(\mathbf{x}+\epsilon\mathbf{n}_F)$.

Let $\{\varphi_1,\varphi_2,\cdots,\varphi_N\}$ denote the CR shape functions, which are piecewise linear and satisfy the nodal property at edge midpoints: $
	\varphi_i(\mathbf{x}_j) =\delta_{ij}$ for $i,j=1,2,\cdots,N$.
Any  $v_h \in \mathcal{V}_h$ can then be expressed as $v_h = \sum\limits_{i=1}^N v_i\varphi_i$.
Denote the support region of $\varphi_i$ as $S_i$  and  and the intersection of two supports by $S_{ij}:=S_i\cap S_j$.  For a union of cells $E \subset \mathcal{K}_h$, let $\mathcal{I}(E) := \{ j : |S_j \cap E| \neq 0 \}$ be the set of indices of shape functions whose support intersects $E$. The area of $E$ is denoted by $|E|$, and the cardinality of $\mathcal{I}(E)$ by $\mathrm{card}(\mathcal{I}(E))$.
  
\section{Crouzeix-Raviart Finite Element Method}
\label{sec:crfem}
\subsection{Semi-discretization}
Following a weak formulation similar to the discontinuous Galerkin method \cite{MR1619652,MR3311872}, with boundary conditions imposed weakly, the semi-discretization of (\ref{model}) reads: find $u_h \in \mathcal{V}_h$ such that
\begin{equation}
	\int_{\Omega}\p_t u_hv_h\mathrm{d}\mathbf{x}+a_h(u_h,v_h)= b_h(u_h,v_h)+l_h(u_h,v_h)\quad \forall\ v_h\in \mathcal{V}_h,\label{weakform} 
\end{equation}
where 
\begin{subequations}
\begin{align}
&a_h(u_h,v_h):=	\sum_{K\in\mathcal{K}_h}\int_{K}(\bm{\beta}u_h)\cdot\nabla v_h\mathrm{d}\mathbf{x},\label{ah}\\
&b_h(u_h,v_h):=\sum_{K\in\mathcal{K}_h}\int_{\p K-\p\Omega}\bm{\beta}\cdot\mathbf{n}u_hv_h\,\mathrm{d}s -\sum_{F\in\mathcal{F}_h^{\mathrm{int}}}\int_F\mathrm{Up}(u_h,\bm{\beta})(-\llbracket v_h\rrbracket)\, \mathrm{d} s,\label{bh}\\
&l_h(u_h,v_h):=\int_{\p \Omega^-}\bm{\beta}\cdot \mathbf{n}(u_h-u|_{\text{in}})v_h\,\mathrm{d}s.\label{lh}
\end{align}
\end{subequations}
The upwind flux is defined by
\begin{equation}\mathrm{Up}(u_h,\bm{\beta})|_F:=u_{h,F}^-[\bm{\beta}\cdot\mathbf{n}_F]^++u_{h,F}^+[\bm{\beta}\cdot\mathbf{n}_F]^-,\end{equation}
with 
$[\bm{\beta}\cdot\mathbf{n}_F]^+:=\max(0,\bm{\beta}\cdot\mathbf{n}_F)$, $[\bm{\beta}\cdot\mathbf{n}_F]^-:=\min(0,\bm{\beta}\cdot\mathbf{n}_F)$,
and $\mathbf{n}_F$ denotes the fixed unit normal vector on the edge $F$. 

Here, $b_h(u_h,v_h)$ represents the consistency error of the divergence term, which vanishes for $H^1$-conforming elements, and $l_h(u_h,v_h)$ accounts for the weakly imposed inflow boundary condition. Without these two terms, the scheme is observed to suffer from a loss of both accuracy and stability.

\subsection{Time discretization}
\label{sec:timediscretization}
 Let $u_h^n=\sum_{i=1}^NU_i^n\varphi_i$ approximate $u(\mathbf{x},t^n)$, with $U^n = (U_1,U_2,\cdots,U_N)^T$ and time step $\Delta t ^n=t^{n+1}-t^n$. The forward Euler method leads to
 \begin{equation}
    M(U^{n+1}-U^n) =\Delta t^n F^n,\label{forwardeuler}
 \end{equation}
where the mass matrix $M$ has entries $m_{ij} = \sum_{K\in\mathcal{K}_h} \int_K \varphi_i \varphi_j \, d\mathbf{x}$, and
 \begin{equation}
    F^n_i = -a_h(u_h^n,\varphi_i)+b_h(u_h^n,\varphi_i)+l_h(\varphi_i), \quad i=1,2,\cdots,N.\label{F}
 \end{equation}
 Note that the mass matrix $M$ is diagonal, i.e., $m_{ij} = \delta_{ij}m_i$, where $m_i := \frac13|S_i|$. The scheme therefore simplifies to
\begin{equation}
    U_i^{n+1} = U_i^n+\frac{\Delta t^n}{m_i}F_i^n,\quad i=1,2,\cdots,N.\label{forward}
\end{equation}
For high-order time discretization, we resort to strong stability preserving Runge-Kutta method (SSP RK) \cite{MR1854647}.  All numerical tests in this paper are based on the following SSP RK(3,3) method:
\begin{equation}
    \begin{aligned}
        &y^{(1)}=y^n+\Delta t^nf(y^{(n)}),\\
        &y^{(2)}=\frac34y^n+\frac14(y^{(1)}+\Delta tf(t^n+\Delta t^n,y^{(1)})),\\
        &y^{n+1}=\frac13y^n+\frac23(y^{(2)}+\Delta tf(t^n+\frac12\Delta t^n,y^{(2)})).
    \end{aligned}
\end{equation}
Each stage of SSPRK(3,3) can be expressed as a convex combination of forward Euler substeps; hence, it preserves the maximum principle whenever the forward Euler scheme does. The main objective of this paper is to develop maximum-principle-preserving schemes based on (\ref{weakform}).

\section{Maximum-principle-preserving scheme}
\label{sec:maximumprinciple}
In this section, we first develop a low-order maximum-principle-preserving scheme by adding appropriate artificial viscosity to the forward Euler method. We then introduce two techniques for reducing the viscosity to achieve high-order accuracy while maintaining the maximum principle at the degrees of freedom.

Assume that  there's no inflow boundary, then 
$F_i$ in (\ref{F}) can be rewritten as
\begin{equation}F_i^n = \sum_{j\in\mathcal{I}(S_i)}(-a_{ij}^n+b_{ij}^n)U_j^n,\end{equation}
where $a_{ij}^n:=	a_h(\varphi_j,\varphi_i)$, $b_{ij}^n:=b_h(\varphi_j,\varphi_i)$.
Then (\ref{forward}) reads
\begin{equation}
	U^{n+1} = U^n-\Delta t^n M^{-1}S^nU^n=(I-\Delta t^n M^{-1}S^n)U^n,\label{forwardA}
\end{equation}
where $S^n:= A^n-B^n$ with $A^n = (a_{ij}^n)$ and $B^n = (b_{ij}^n)$.

\begin{theorem}[Mass conservation]\label{thm:conservation}
	If the boundary is impermeable, i.e., $\bm{\beta}\cdot \mathbf{n}=0$ on $\partial\Omega$, if $\bm{\beta}$ is divergence-free, and if the normal component of $\bm{\beta}$ is continuous across each interior edge, then the scheme \eqref{forwardA} is conservative, i.e.,
	\[
	\int_{\Omega}u_h^n\,\mathrm{d}\mathbf{x}=\int_{\Omega}u_h^0\,\mathrm{d}\mathbf{x}
	\qquad\text{for all } n\ge 0.
	\]
\end{theorem}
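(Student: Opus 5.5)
Since $\int_\Omega \varphi_i\,\mathrm{d}\mathbf{x}=m_i=\frac13|S_i|$ (CR basis functions integrate to the mass-matrix diagonal, by midpoint quadrature on each triangle), we have $\int_\Omega u_h^n\,\mathrm{d}\mathbf{x}=\sum_i m_iU_i^n$. By \eqref{forward} it therefore suffices to show $\sum_i F_i^n=0$, i.e. $\sum_i\big(-a_h(u_h^n,\varphi_i)+b_h(u_h^n,\varphi_i)\big)=0$. Set $w:=\sum_i\varphi_i$. On each triangle $K$, $w|_K$ is the linear function equal to $1$ at the three midpoints, so $w\equiv1$ on $\Omega$. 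By bilinearity the claim reduces to $-a_h(u_h,1)+b_h(u_h,1)=0$ for every $u_h\in\mathcal{V}_h$. Note that $l_h$ vanishes because there is no inflow boundary ($\bm\beta\cdot\mathbf n=0$).

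Next I evaluate both terms with $v_h=1$. In $b_h$, the jump $\llbracket 1\rrbracket=0$, so the upwind term drops out. Hence
\[
b_h(u_h,1)=\sum_K\int_{\partial K\setminus\partial\Omega}\bm\beta\cdot\mathbf n_K\,u_h\,\mathrm{d}s .
\]
In $a_h$, $\nabla 1=0$, so $a_h(u_h,1)=0$. Thus everything hinges on showing
\[
\sum_K\int_{\partial K\setminus\partial\Omega}\bm\beta\cdot\mathbf n_K u_h\,\mathrm{d}s=0 .
\]

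This is the main obstacle, since $u_h$ is discontinuous across edges and the two contributions from an interior edge do not cancel pointwise. My first approach is to apply the divergence theorem elementwise. Using $\nabla\cdot\bm\beta=0$ and $\bm\beta\cdot\mathbf n=0$ on $\partial\Omega$,
\[
\int_{\partial K}\bm\beta\cdot\mathbf n_K u_h=\int_K\bm\beta\cdot\nabla u_h ,
\]
so the sum equals $\sum_K\int_K\bm\beta\cdot\nabla u_h$, which is $a_h(1,u_h)$ rather than zero. Consequently the cancellation must be checked against the actual definition of $a_h$ with the test function in the gradient slot. If $a_h(u_h,v)=\sum_K\int_K u_h\bm\beta\cdot\nabla v$ as written, then $a_h(u_h,1)=0$, and the $b_h$ term must vanish on its own. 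To see that it does, I rewrite each interior edge contribution as $\int_F \bm\beta\cdot\mathbf n_F\,(u_h^--u_h^+)=-\int_F\bm\beta\cdot\mathbf n_F\llbracket u_h\rrbracket$, using continuity of the normal component. If $\bm\beta\cdot\mathbf n_F$ is constant on $F$ (e.g. $\bm\beta$ piecewise $RT_0$-like, or after edge-midpoint quadrature), this vanishes by the CR weak continuity $\int_F\llbracket u_h\rrbracket=0$.

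Finally, I will check the alternative route via integration by parts. Because divergence-free $\bm\beta$ gives $\sum_K\int_K\bm\beta\cdot\nabla u_h=\sum_K\int_{\partial K}\bm\beta\cdot\mathbf n u_h$, the same quantity is the CR-weighted jump sum in either form. So under the stated hypotheses (together with edge quadrature or constant normal flux per edge, which I expect the authors use implicitly) $\sum_iF_i^n=0$, giving $\sum_im_iU_i^{n+1}=\sum_im_iU_i^n$. Induction on $n$ then yields the result. The delicate step is justifying $\int_F\bm\beta\cdot\mathbf n_F\llbracket u_h\rrbracket=0$ for general $\bm\beta$, and I would state the edgewise-constant normal flux assumption or quadrature explicitly there.
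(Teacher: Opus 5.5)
Your reduction matches the paper's. Since $\int_\Omega\varphi_i=m_i$ and $\sum_i\varphi_i\equiv1$, conservation is equivalent to the column-sum identity $\sum_i(-a_{ij}^n+b_{ij}^n)=-a_h(\varphi_j,1)+b_h(\varphi_j,1)=0$, and that is the one fact the paper's proof invokes.

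The gap is in how you close this identity. You keep the printed $a_h(u_h,v_h)=\sum_K\int_K(\bm\beta u_h)\cdot\nabla v_h$, which gives $a_h(\varphi_j,1)=0$. You then need $\int_F\bm\beta\cdot\mathbf n_F\llbracket\varphi_j\rrbracket\,\mathrm{d}s=0$, and you only get it by adding a hypothesis that is not in the statement (edgewise-constant normal flux, or midpoint quadrature on edges). So you prove a weaker theorem.

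This is also not something the authors "use implicitly". With the printed $a_h$, the column sum equals $\sum_K\int_K\bm\beta\cdot\nabla\varphi_j=|F_j|\,\mathbf n_{F_j}\cdot(\bar{\bm\beta}_{K_1}-\bar{\bm\beta}_{K_2})$. Here $K_1,K_2$ are the two triangles of $S_j$, $\bar{\bm\beta}_K$ is the mean of $\bm\beta$ over $K$, and $\mathbf n_{F_j}$ points from $K_1$ into $K_2$. This is nonzero in a simple case: take $\bm\beta=\nabla^\perp\psi$ with $\psi=0$ on $\partial\Omega$ and $\psi=\pi|\mathbf{x}|^2$ near an interior vertical edge of the uniform diagonal mesh, whose two neighbouring centroids differ in height by $h/3$. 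So under the printed formula the theorem is false.

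The missing idea is that $a_h$ must be read with the gradient on the trial function, $a_h(u_h,v_h)=\sum_K\int_K(\bm\beta\cdot\nabla u_h)\,v_h\,\mathrm{d}\mathbf{x}$. Two checks force this reading.
\begin{itemize}
\item Together with $b_h$, this is the standard upwind DG scheme in advective form and is consistent. The printed form, for solenoidal $\bm\beta$ and conforming functions, is the weak form of $\partial_t u-\bm\beta\cdot\nabla u=0$.
\item With the printed form, the row sums $\sum_j s_{ij}^n=a_h(1,\varphi_i)-b_h(1,\varphi_i)=\sum_K\int_K\bm\beta\cdot\nabla\varphi_i$ would not vanish. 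That contradicts $s_{ii}^n=-\sum_{j\ne i}s_{ij}^n$, which Theorem~\ref{thm:localmaxprinciple} and Lemma~\ref{psi} rely on.
\end{itemize}

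With the correct $a_h$, your own elementwise divergence-theorem identity finishes the proof:
\[
b_h(\varphi_j,1)=\sum_{K}\int_{\partial K\setminus\partial\Omega}\bm\beta\cdot\mathbf n_K\,\varphi_j\,\mathrm{d}s=\sum_{K}\int_K\bm\beta\cdot\nabla\varphi_j\,\mathrm{d}\mathbf{x}=a_h(\varphi_j,1).
\]
This uses only $\nabla\cdot\bm\beta=0$ and $\bm\beta\cdot\mathbf n|_{\partial\Omega}=0$; the latter also gives $l_h\equiv0$. The column sums therefore vanish exactly, with no quadrature or constant-flux assumption. The normal-continuity hypothesis is not needed for this step; it is what makes the row sums vanish, i.e.\ constant preservation.
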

\begin{proof}
	From (\ref{forwardA}), we have
	\[\int_{\Omega}u_h^{n+1}\,\mathrm{d}\mathbf{x}-\int_{\Omega}u_h^{n}\,\mathrm{d}\mathbf{x}=\sum_{i=1}^N m_i(U_i^{n+1}-U_i^n)
		=-\Delta t^n\sum_{i,j}(-a_{ij}^n+b_{ij}^n)U_j^n=0.\]
	The last equality follows since $\sum\limits_i(-a_{ij}^n+b_{ij}^n)=0$ for all $j$.
\end{proof}

\subsection{Low-order viscosity}
\label{sec:viscosity}
To construct a low-order method, we introduce a viscosity matrix 
$V^n$ added to $S^n$ in (\ref{forwardA}) such that  $W^n:=I-\Delta t^n M^{-1}(S^n-V^n)$ is nonnegative with row sums equal to 1. In this case,  \( U^{n+1} \)  becomes a convex combination of the previous values \( U^n \), which ensures that the maximum principle with respect to DOFs is preserved.
Following \cite{MR3738312}, $V^n$ is required to satisfy the following properties:
\begin{enumerate}
	\item Symmetry: \( v_{ij}^n = v_{ji}^n \) for all \( i, j = 1, 2,\cdots, N \).
	\item Conservation: \( \sum_{j=1}^N v_{ij}^n = 0 \) for all \( i = 1,  2,\cdots, N \).
	\item Nonnegativity: \( v_{ij}^n \ge s_{ij}^n \) for all \( i \neq j \).
\end{enumerate}
The viscosity matrix with the minimal entries that satisfies the above conditions can be defined as follows
\begin{equation}
	v_{ij}^n :=\begin{cases} \max(0,s_{ij}^n,s_{ji}^n),&\text{if }  j\ne i, \\
	-\sum\limits_{j\ne i}v_{ij}^n,& \text{if }i=j . \end{cases}\label{minvisc}
\end{equation}
We refer to this as the minimum viscosity.
A more diffusive but structurally convenient variant, termed the bilinear viscosity, is inspired by the tensor-valued formulation in \cite{MR3171280}:
\begin{equation}
v_{ij}^n :=\begin{cases} \max(0,\max\limits_{l,k\in\mathcal{I}(S_{ij}),l\ne k}s_{lk}^n)&\text{if }i\ne j\\-2\sum\limits_{K\subset S_i}\max(0,\max\limits_{l,k\in\mathcal{I}(S_{ij}),l\ne k}s_{lk}^n)& \text{if }i=j,\end{cases}\label{bilinearvisc}
\end{equation}
which admits a bilinear representation. To see how it can be rewritten as a bilinear form, we first define  $\nu_K^n$ the viscosity on each element as
\begin{equation}
\nu_K^n := \max_{i\ne j \in \mathcal{I}(K)}\frac{\max(0,s_{ij}^n)}{-r_K(\varphi_i,\varphi_j)},
\end{equation}
where $r_K(\cdot,\cdot)$ satisfies the same properties as $b_K(\cdot,\cdot)$ in \cite{MR3249370}.
Then 
\begin{equation}v_{ij}^n = -\sum_{K\subset S_{ij}}\nu_K^n r_K(\varphi_i,\varphi_j), \quad j\in\mathcal{I}(S_i),\quad i=1,2,\cdots,N.\end{equation}
and, for any $u_h,v_h\in \mathcal{V}_h$, we define
\begin{equation}
	r_h(u_h,v_h) :=\sum_{K\in\mathcal{K}_h}\sum_{i,j\in\mathcal{I}(K)}u_jv_i\nu_K^n r_K(\varphi_j,\varphi_i).
\end{equation}
Thus, adding the bilinear viscosity matrix into (\ref{forwardA})  is equivalent to adding the bilinear form $b_h(u_h,v_h)$  into the left hand side of the weak formulation (\ref{weakform}).

\begin{remark}
For the CR element, each basis function $\varphi_i$ has at most five neighboring degrees of freedom (DOFs), located on no more than two adjacent cells.  This localized connectivity of the CR element provides significant advantages in parallel computing and leads to a sparser matrix, reducing both communication and computational costs.
\end{remark}

\begin{theorem}[Local discrete maximum principle with respect to DOFs]\label{thm:localmaxprinciple}
Let $u_{\min}:=\inf_{\mathbf{x}\in\Omega}u_0(\mathbf{x})$, $u_{\max}:=\sup_{\mathbf{x}\in\Omega}u_0(\mathbf{x})$. Assume that $u_{\min}\le U_i^0\le u_{\max}$, and $\norm{\bm{\beta}}_{L^{\infty}(\Omega\times\R^+)}< \infty$. Then, with minimum viscosity matrix (\ref{minvisc}) applied,  the scheme satisfies a local discrete maximum principle with respect to DOFs, i.e. $u_{\min}\le \min_{j\in\mathcal{I}(S_i)}U_j^n\le \max_{j\in\mathcal{I}(S_i)}U_j^n\le u_{\max}$ for all time step $n\ge 0$, provided that $\Delta t^n$ is sufficiently small. More precisely, the condition
\begin{equation}
	\Delta t^n\le \min_{i=1,2,\cdots,N}\frac{m_i}{s_{ii}^n-v_{ii}^n},\label{CFL}
\end{equation}
is sufficient.
\end{theorem}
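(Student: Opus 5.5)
The plan is to show that the update matrix $W^n=I-\Delta t^n M^{-1}(S^n-V^n)$ is entrywise nonnegative with unit row sums. Then each $U_i^{n+1}$ is a convex combination of $\{U_j^n\}_{j\in\mathcal{I}(S_i)}$, and induction on $n$ gives the bounds. Componentwise the scheme reads
\begin{equation*}
U_i^{n+1}=\Bigl(1-\frac{\Delta t^n}{m_i}(s_{ii}^n-v_{ii}^n)\Bigr)U_i^n+\sum_{j\ne i}\frac{\Delta t^n}{m_i}\bigl(v_{ij}^n-s_{ij}^n\bigr)U_j^n ,
\end{equation*}
where the sum runs only over $j\in\mathcal{I}(S_i)$, since $s_{ij}^n=v_{ij}^n=0$ otherwise.

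First I check the row sums. For the viscosity part, conservation of $V^n$ in \eqref{minvisc} gives $\sum_j v_{ij}^n=0$ by construction of the diagonal. For $S^n$, I need $\sum_j s_{ij}^n=0$ for every $i$, i.e.\ $S^n$ annihilates constants. Since $\sum_j\varphi_j\equiv1$ in $\mathcal{V}_h$, this is $a_h(1,\varphi_i)-b_h(1,\varphi_i)=0$. I would verify it elementwise by integration by parts: $\int_K\bm\beta\cdot\nabla\varphi_i=\int_{\partial K}\bm\beta\cdot\mathbf{n}\varphi_i-\int_K(\nabla\cdot\bm\beta)\varphi_i$. The boundary terms cancel against the first sum in $b_h$, and the upwind term with $u_h\equiv1$ gives $\int_F\bm\beta\cdot\mathbf n_F\llbracket\varphi_i\rrbracket$, which vanishes by the CR mean-jump condition if $\bm\beta\cdot\mathbf n_F$ is constant on $F$.

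This step is the main obstacle. The row-sum identity is a zero-divergence (or compatibility) requirement on $\bm\beta$, the dual of the column identity used in Theorem~\ref{thm:conservation}. I would first try to reduce it to the hypotheses used there, or else note that the statement implicitly assumes a solenoidal, piecewise-constant-normal field. If it fails, the convex-combination argument gives bounds only up to an $O(\Delta t\,\|\nabla\cdot\bm\beta\|)$ perturbation. Assuming the identity holds, the row sum of $W^n$ equals $1-\frac{\Delta t^n}{m_i}\sum_j(s_{ij}^n-v_{ij}^n)=1$.

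Next comes nonnegativity. The off-diagonal entries are $\frac{\Delta t^n}{m_i}(v_{ij}^n-s_{ij}^n)\ge0$, because \eqref{minvisc} gives $v_{ij}^n\ge \max(0,s_{ij}^n)\ge s_{ij}^n$. The diagonal entry $1-\frac{\Delta t^n}{m_i}(s_{ii}^n-v_{ii}^n)$ is nonnegative exactly under \eqref{CFL}. Here I note that $s_{ii}^n-v_{ii}^n=\sum_{j\ne i}(v_{ij}^n-s_{ij}^n)\ge0$ by the zero row sums, so the bound is meaningful. It is also finite and positive because $\|\bm\beta\|_{L^\infty}<\infty$ makes every $s_{ij}^n$ bounded and $m_i=\tfrac13|S_i|>0$.

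Finally, the induction. Assume $u_{\min}\le U_j^n\le u_{\max}$ for all $j$, which holds at $n=0$ by hypothesis. The convex combination gives $\min_{j\in\mathcal{I}(S_i)}U_j^n\le U_i^{n+1}\le\max_{j\in\mathcal{I}(S_i)}U_j^n$, so $u_{\min}\le U_i^{n+1}\le u_{\max}$ for all $i$. Taking the min and max over $j\in\mathcal{I}(S_i)$ then yields the stated chain at every time level.
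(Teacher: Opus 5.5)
Your argument follows the paper's proof: $v_{ij}^n\ge s_{ij}^n$ makes the off-diagonal weights nonnegative, \eqref{CFL} makes the diagonal weight nonnegative, unit row sums make $U_i^{n+1}$ a convex combination, and induction closes. The paper uses the row-sum identity $\sum_j s_{ij}^n=0$ without justification or any stated hypothesis on $\bm\beta$, so your caveat is well placed; note also that the first sum in $b_h$ omits $\partial\Omega$, so your integration by parts leaves the term $\int_{\partial\Omega}\bm\beta\cdot\mathbf{n}\,\varphi_i\,\mathrm{d}s$, and you need $\bm\beta\cdot\mathbf{n}=0$ on $\partial\Omega$ (as in Theorem~\ref{thm:conservation}) in addition to $\nabla\cdot\bm\beta=0$ and $\bm\beta\cdot\mathbf{n}_F$ constant on each edge.
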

\begin{proof}
	We proceed by induction.
For $n=0$, the claim holds by the initial condition.
Assume it holds for step $n$. From the definition of the minimum viscosity (\ref{minvisc}), we have for $i\ne j$:
	\[w_{ij}^n=-\frac{\Delta t^n}{m_i}(s_{ij}^n-v_{ij}^n)\ge 0,\]
	and for $i=j$, we have
	\[w_{ii}^n=1-\frac{\Delta t^n}{m_i}(s_{ii}^n-v_{ii}^n).\]
	Hence if $\Delta t^n\le \frac{m_i}{s_{ii}^n-v_{ii}^n}$, then $w_{ii}^n\ge 0$. Moreover,
	\[\sum_{j\in\mathcal{I}(S_i)}w_{ij}=1-\frac{\Delta t^n}{m_i}(\sum_{j\in\mathcal{I}(S_i)}s_{ij}^n-\sum_{j\in\mathcal{I}(S_i)}v_{ij}^n)=1,\]
	so each $U_i^{n+1}$ is a convex combination of ${U_j^n}_{j\in\mathcal{I}(S_i)}$.
The induction hypothesis then implies the result for $n+1$.
\end{proof}

\begin{remark}
A similar argument shows that the local discrete maximum principle also holds when the bilinear viscosity (\ref{bilinearvisc}) is used instead of the minimum viscosity.
\end{remark}

\begin{theorem}[Mass conservation]
	Assume that the conditions in Theorem \ref{thm:conservation} hold, that the viscosity matrix satisfies $v_{ij}^n=v_{ji}^n$, and that $\sum_{j\in\mathcal{I}(S_i)}v_{ij}^n=0$. Then the scheme
	\begin{equation}
		U^{n+1}=(I-\Delta t^n M^{-1}(S^n-V^n))U^n\label{forward_mass}
	\end{equation}
	is conservative, i.e.,
	\[
	\int_{\Omega}u_h^n\,\mathrm{d}\mathbf{x}=\int_{\Omega}u_h^0\,\mathrm{d}\mathbf{x}
	\qquad\text{for all } n\ge 0.
	\]
\end{theorem}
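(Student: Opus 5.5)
The argument follows the proof of Theorem \ref{thm:conservation}, with the viscosity contribution handled separately. Since the CR mass matrix is diagonal with $m_i=\frac13|S_i|$ and $\int_\Omega \varphi_i\,\mathrm{d}\mathbf{x} = m_i$ (the midpoint quadrature rule is exact for linear functions on triangles), we have $\int_\Omega u_h^n\,\mathrm{d}\mathbf{x}=\sum_i m_i U_i^n$. Multiplying \eqref{forward_mass} by $M$ and summing over $i$ gives
\[
\int_{\Omega}u_h^{n+1}\,\mathrm{d}\mathbf{x}-\int_{\Omega}u_h^{n}\,\mathrm{d}\mathbf{x}
=-\Delta t^n\sum_{j}\Big(\sum_i s_{ij}^n\Big)U_j^n+\Delta t^n\sum_j\Big(\sum_i v_{ij}^n\Big)U_j^n .
\]
It therefore suffices to show that both column sums vanish for every $j$. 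The result for all $n$ then follows by induction on $n$.

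For the transport part, we rely on the identity $\sum_i(-a_{ij}^n+b_{ij}^n)=0$ already used in Theorem \ref{thm:conservation}, under the same hypotheses. To see why it holds, note that $\sum_i\varphi_i\equiv 1$ on $\Omega$. Hence $\sum_i a_h(\varphi_j,\varphi_i)=a_h(\varphi_j,1)=0$, because $\nabla 1=0$ elementwise.

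For the $b_h$ term, the jump of the constant function $1$ vanishes, so the upwind term drops out. This leaves $\sum_K\int_{\partial K\setminus\partial\Omega}\bm{\beta}\cdot\mathbf{n}_K\varphi_j\,\mathrm{d}s$. On an interior edge this becomes $\int_F \bm{\beta}\cdot\mathbf{n}_F\llbracket\varphi_j\rrbracket\,\mathrm{d}s$, up to sign, using continuity of the normal component of $\bm{\beta}$.

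This expression must be matched with the contribution from $a_h$. Integrating by parts elementwise, and using $\nabla\cdot\bm\beta=0$ together with $\bm\beta\cdot\mathbf{n}=0$ on $\partial\Omega$, the exact quantity $\int_\Omega \bm\beta\cdot\nabla_h\varphi_j$ equals exactly these interior edge terms. This is the only delicate point. However, since the statement lets us assume Theorem \ref{thm:conservation}, we simply invoke its conclusion $\sum_i s_{ij}^n=0$ for all $j$.

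For the viscosity part, symmetry gives $\sum_i v_{ij}^n=\sum_i v_{ji}^n$. Since $v_{ji}^n=0$ whenever $i\notin\mathcal{I}(S_j)$, this equals $\sum_{i\in\mathcal{I}(S_j)}v_{ji}^n$, which is $0$ by the assumed row-sum condition. Both sums therefore vanish, the mass increment is zero at each step, and induction from $n=0$ completes the proof.

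The main obstacle is only the bookkeeping needed to convert the row-sum hypothesis into a column-sum statement, which symmetry handles directly. The genuinely analytic part, the vanishing column sums of $S^n$, is inherited from Theorem \ref{thm:conservation}.
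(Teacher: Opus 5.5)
Your proof is correct and is essentially the paper's. Both write the mass increment as $\sum_i m_i(U_i^{n+1}-U_i^n)$, drop the transport part via the column-sum identity $\sum_i s_{ij}^n=0$ inherited from Theorem~\ref{thm:conservation}, and eliminate the viscous part using the symmetry of $V^n$ together with its zero row sums (the paper writes this as $\Delta t^n\sum_{i,j}v_{ij}^n(U_j^n-U_i^n)=0$). Your aside on why $\sum_i s_{ij}^n=0$ is internally inconsistent: having shown $a_h(\varphi_j,1)=0$ for $a_h$ as literally defined, no $a_h$ term $\int_\Omega\bm{\beta}\cdot\nabla_h\varphi_j\,\mathrm{d}\mathbf{x}$ remains to cancel $b_h(\varphi_j,1)$ (that cancellation is the mechanism for the nonconservative form $\sum_K\int_K(\bm{\beta}\cdot\nabla u_h)v_h\,\mathrm{d}\mathbf{x}$), but since you ultimately just invoke Theorem~\ref{thm:conservation}, as the paper does, the proof is unaffected.
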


\begin{proof}
	From (\ref{forward_mass}), we have
\[\int_{\Omega}u_h^{n+1}\,\mathrm{d}\mathbf{x}-\int_{\Omega}u_h^{n}\,\mathrm{d}\mathbf{x}
	=\sum_{i=1}^N m_i(U_i^{n+1}-U_i^n)
	=\Delta t^n\sum_{i,j}v_{ij}^n(U_j^n-U_i^n)=0.\]
	The last equality follows from the symmetry of $V^n$ and the property $\sum_{j\in\mathcal{I}(S_i)}v_{ij}^n=0$.
\end{proof}

\subsection{Higher-order viscosity}
In this section, we introduce two techniques to enhance the low-order scheme introduced in Section \ref{sec:viscosity}. The goal is to reduce the solution error on a given mesh and achieve higher accuracy while preserving the discrete maximum principle at the level of DOFs.

\label{sec:highorderviscosity}
\subsubsection{Preliminary lemma}
We begin by defining the local extrema of $u_h^n$ at the nodal points $\{\mathbf{x}_j\}_{j=1}^N$ within the support of each basis function $\varphi_i$:
\begin{equation}
  U^{M,n}_i = \max_{j\in\mathcal{I}(S_i)} U_j^n,\quad
  U^{m,n}_i = \min_{j\in\mathcal{I}(S_i)} U_j^n.
\end{equation}
Then, we introduce the normalized parameter
\begin{equation}
	\theta_i^n := \begin{cases} 
\frac{U_i^n - U_i^{m,n}}{U_i^{M,n} - U_i^{m,n}} & \text{if } U_i^{M,n} - U_i^{m,n} \neq 0, \\
\frac{1}{2} & \text{otherwise.}
\end{cases}
\end{equation}
Let $V^{L,n}=(v_{ij}^{L,n})$ denote the low-order viscosity matrix and define the high-order viscosity in the form
\begin{equation}
	V^{H,n} = \Psi^{n}*V^{L,n},
\end{equation}
where $*$ denotes elementwise multiplication, and $\Psi^n=(\psi_{ij}^n)$ is a scaling matrix determined by a vector $\psi^n=(\psi_i^n)$ with entries in $[0,1]$:
\begin{equation}
	\psi_{ij}^n = \max(\psi_i^n,\psi_j^n),\quad  i\ne j, \ i,j=1,2,\cdots,N.\label{psiij1}
\end{equation} 
For the diagonal entries, we enforce consistency through
\begin{equation}
	\psi_{ii}^n = \begin{cases}-\frac{\sum_{j\ne i}\psi^n_{ij} v_{ij}^{L,n}}{v_{ii}^{L,n}}&\text{if }v_{ii}^{L,n}=0,\\
	0&\text{otherwise},\end{cases}\quad i=1,2,\cdots,N.\label{psiij3}
\end{equation}
Define $\mathcal{I}(S_i^+) = \{j \in \mathcal{I}(S_i) \mid \mathsf{U}_i^n < \mathsf{U}_j^n\},\ \mathcal{I}(S_i^-) = \{j \in \mathcal{I}(S_i) \mid \mathsf{U}_i^n > \mathsf{U}_j^n\}$ and introduce the parameters
\begin{equation}
		\gamma_i^n :=\frac{\Delta t^n}{m_i}(s_{ii}^n-v_{ii}^{L,n}),
\quad \gamma_i^{\pm,n} := \frac{\Delta t^n}{m_i} \sum_{j \in \mathcal{I}(S_i^{\pm})}| v_{ij}^{L,n}|.
\end{equation}
With these definitions and notations in place, we can now state the following lemma.
\begin{lemma}\label{psi}
	{Assume that}
$k\gamma_i^n < 1,k\in\mathbb{N}^+$  and 
$U_{i}^{M,n} - U_{i}^{m,n} \neq 0, \ n \geq 0, \ i \in \{1,2,\cdots,N \}$; then
\begin{align}
	& U_{i}^{H,n+1} \leq U_{i}^{M,n} - (U_{i}^{M,n} - U_{i}^{n}) \left((1 - \theta_{i}^{n})(1 - k\gamma_{i}^{n}) - \theta_{i}^{n}(1 - \psi_{i}^{n}) \gamma_{i}^{-,n} \right), \\
& U_{i}^{H,n+1} \geq U_{i}^{m,n} + (U_{i}^{M,n} - U_{i}^{n}) \left( \theta_{i}^{n} (1 -k \gamma_{i}^{n}) - (1 - \theta_{i}^{n})(1 - \psi_{i}^{n})\gamma_{i}^{+, n} \right).
\end{align}
\end{lemma}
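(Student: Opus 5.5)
The plan is to write the high-order update in incremental form around $U_i^n$ and then bound the increments separately over $\mathcal{I}(S_i^+)$ and $\mathcal{I}(S_i^-)$. The scheme with $V^{H,n}$ reads
\[
U_i^{H,n+1}=U_i^n+\frac{\Delta t^n}{m_i}\sum_{j\in\mathcal{I}(S_i)}\bigl(-s_{ij}^n+v_{ij}^{H,n}\bigr)U_j^n .
\]
Since $\sum_j s_{ij}^n=0$ (this is the row-sum property used in the proof of Theorem \ref{thm:localmaxprinciple}) and $\sum_j v_{ij}^{H,n}=0$ (this is what the diagonal choice \eqref{psiij3} enforces), we may subtract $U_i^n$ inside the sum. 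This gives
\[
U_i^{H,n+1}=U_i^n+\frac{\Delta t^n}{m_i}\sum_{j\ne i}\bigl(-s_{ij}^n+\psi_{ij}^nv_{ij}^{L,n}\bigr)(U_j^n-U_i^n).
\]
Next we split $\psi_{ij}^nv_{ij}^{L,n}=v_{ij}^{L,n}-(1-\psi_{ij}^n)v_{ij}^{L,n}$. The low-order part has coefficients $w_{ij}=\frac{\Delta t^n}{m_i}(v_{ij}^{L,n}-s_{ij}^n)\ge0$ for $j\ne i$, by the nonnegativity property of $V^{L,n}$. The remaining part is a correction $-\frac{\Delta t^n}{m_i}\sum_{j\ne i}(1-\psi_{ij}^n)v_{ij}^{L,n}(U_j^n-U_i^n)$. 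Because $\psi_{ij}^n=\max(\psi_i^n,\psi_j^n)\ge\psi_i^n$ and $v_{ij}^{L,n}\ge0$ off the diagonal, we have $0\le(1-\psi_{ij}^n)v_{ij}^{L,n}\le(1-\psi_i^n)|v_{ij}^{L,n}|$.

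For the upper bound, the low-order part contributes at most $\sum_{j\ne i}w_{ij}(U_i^{M,n}-U_i^n)=\gamma_i^n(U_i^{M,n}-U_i^n)$, using $\sum_{j\ne i}w_{ij}=\gamma_i^n$. This is bounded by $k\gamma_i^n(U_i^{M,n}-U_i^n)$ since $k\ge1$. The correction is only positive for $j\in\mathcal{I}(S_i^-)$, where $U_j^n-U_i^n<0$. There we bound $|U_j^n-U_i^n|\le U_i^n-U_i^{m,n}$, so the correction is at most $(1-\psi_i^n)\gamma_i^{-,n}(U_i^n-U_i^{m,n})$.

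Hence $U_i^{H,n+1}\le U_i^{M,n}-(U_i^{M,n}-U_i^n)(1-k\gamma_i^n)+(1-\psi_i^n)\gamma_i^{-,n}(U_i^n-U_i^{m,n})$. It remains to rewrite the last term in terms of $U_i^{M,n}-U_i^n$. With $\Delta:=U_i^{M,n}-U_i^{m,n}\neq0$ we have $U_i^n-U_i^{m,n}=\theta_i^n\Delta$ and $U_i^{M,n}-U_i^n=(1-\theta_i^n)\Delta$. Substituting gives
\[
U_i^{M,n}-\Delta\bigl((1-\theta_i^n)(1-k\gamma_i^n)-\theta_i^n(1-\psi_i^n)\gamma_i^{-,n}\bigr).
\]
This is the stated bound provided the prefactor in the statement is read as $\Delta=U_i^{M,n}-U_i^{m,n}$. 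I would check that reading against the intended normalization. If the prefactor really is $U_i^{M,n}-U_i^n$, I would instead need $\theta$-weights that absorb it, and this matching is the step I expect to be the main obstacle.

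The lower bound is symmetric. The low-order part is at least $-\gamma_i^n(U_i^n-U_i^{m,n})=-\gamma_i^n\theta_i^n\Delta$. The correction is only negative for $j\in\mathcal{I}(S_i^+)$ and is bounded below by $-(1-\psi_i^n)\gamma_i^{+,n}(1-\theta_i^n)\Delta$. Writing $U_i^n=U_i^{m,n}+\theta_i^n\Delta$ then yields $U_i^{m,n}+\Delta\bigl(\theta_i^n(1-k\gamma_i^n)-(1-\theta_i^n)(1-\psi_i^n)\gamma_i^{+,n}\bigr)$. The hypothesis $k\gamma_i^n<1$ guarantees that the low-order weights form a subconvex combination, which is what the later choice of $\psi_i^n$ exploits.
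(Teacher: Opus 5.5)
Your argument is correct and is essentially the paper's proof. The paper likewise writes $U_i^{H,n+1}$ as the low-order update plus the correction $\frac{\Delta t^n}{m_i}\sum_j(1-\psi_{ij}^n)v_{ij}^{L,n}(U_i^n-U_j^n)$. It bounds the low-order part by $(1-k\gamma_i^n)U_i^n+k\gamma_i^n U_i^{M,n}$, going through the auxiliary convex combination $U_i^{*,n}$ rather than your direct estimate plus $k\ge1$. It bounds the correction by $(1-\psi_i^n)\gamma_i^{-,n}(U_i^n-U_i^{m,n})$ using $\psi_{ij}^n\ge\psi_i^n$, and then substitutes $U_i^n=\theta_i^n U_i^{M,n}+(1-\theta_i^n)U_i^{m,n}$.

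The paper's final line also carries the prefactor $U_i^{M,n}-U_i^{m,n}$, so your reading is the intended one and no further $\theta$-matching is needed. The $U_i^{M,n}-U_i^n$ in the statement is a typo: taken literally, at $\theta_i^n=1$ it would assert $U_i^{H,n+1}\le U_i^{M,n}$ for every $\psi^n$, even for the unlimited Galerkin update $\psi^n\equiv 0$.
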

\begin{proof}
	The low-order update can be expressed as
	\begin{equation}
	\begin{aligned}
	U_{i}^{L,n+1}& = U_i^n - \frac{\Delta t^n}{m_i} \sum_{j \in \mathcal{I}(S_i)} (s_{ij}^n -v_{ij}^{L,n}) U_j^n\\
	& = U_i^{n}(1-k\gamma_i^n)+(k-1)\gamma_i^n U_i^n+\frac{\Delta t^n}{m_i}\sum_{j\in\mathcal{I}(S_i),j\ne i}|s_{ij}^n-v_{ij}^{L,n}|U_j^n.
	\end{aligned}\label{low-order}
	\end{equation}
	From the relations
	$s_{ij}^n-v_{ij}^{L,n} \le 0\ (\forall\ j\ne i),\quad
	s_{ii}^n=-\sum\limits_{j\ne i}s_{ij}^n,\quad
	v_{ii}^{L,n}=-\sum\limits_{j\ne i}v_{ij}^{L,n},$
it follows that
\[s_{ii}^n-v_{ii}^{L,n}=-\sum\limits_{j\ne i}(s_{ij}^n-v_{ij}^{L,n})\ge 0,\quad (k-1)\gamma_i^n+\frac{\Delta t^n}{m_i}\sum_{j\in\mathcal{I}(S_i),j\ne i}|s_{ij}^n-v_{ij}^n|=k\gamma_i^n.\]
Equality $s_{ii}^n - v_{ii}^{L,n} = 0$ holds only if $s_{ij}^n - v_{ij}^{L,n} = 0$ for all $j \ne i$.
We therefore define
 \[
	U_i^{*,n}:=\begin{cases}
\frac{(k-1)\gamma_i^n}{k\gamma_i^n} U_i^n+\sum\limits_{j\in\mathcal{I}(S_i)
j\ne i}\frac{\frac{\Delta t^n}{m_i}|s_{ij}^n-v_{ij}^{L,n}|}{k\gamma_i^n}U_j^n& \text{if } k\gamma_i^n\ne 0,\\
U_i^n&\text{otherwise},
\end{cases}\]
which is a convex combination of the values $\{U_j^n\}_{j=1}^N$, so $U_i^*\in[U_i^{m,n},U_i^{M,n}]$. Equation \eqref{low-order} can thus be rewritten as
\[	U_{i}^{L,n+1} = U_i^{n}(1-k\gamma_i^n)+k\gamma_i^nU_i^{*,n}.\]

The corresponding high-order update reads
	\begin{align*}
	U^{H,n+1}_i &= U_i^n-\frac{\Delta t^n}{m_i}\sum_{j \in \mathcal{I}(S_i)} (s_{ij}^n - v_{ij}^{H,n}) U_j^n\\
	&=U_i^{L,n+1}+\frac{\Delta t^n}{m_i}\sum_{j\in\mathcal{I}(S_i)}(v_{ij}^{L,n}-v_{ij}^{H,n})(U_i^n-U_j^n)\\
	& = U_i^{L,n+1}+\frac{\Delta t^n}{m_i}\sum_{j\in\mathcal{I}(S_i)}(1-\psi_{ij})v_{ij}^{L,n}(U_i^n-U_j^n).
	\end{align*}
By inserting the expression for $U_i^{L,n+1}$, we obtain
\begin{align*}
	U^{H,n+1}_i &= U_i^{n}(1-k\gamma_i^n)+k\gamma_i^nU_i^{*,n}+\frac{\Delta t^n}{m_i}\sum_{j\in\mathcal{I}(S_i)}(1-\psi_{ij})v_{ij}^{L,n}(U_i^n-U_j^n)\\
	&\le U_i^{n}(1-k\gamma_i^n)+k\gamma_i^nU_i^{M,n}+\frac{\Delta t^n}{m_i}\sum_{j\in\mathcal{I}(S_i)}(1-\psi_{ij})v_{ij}^{L,n}(U_i^n-U_j^n)\\
	&\le U_i^{M,n}+(1-k\gamma_i^n)(U_i^n-U_i^{M,n})+(1-\psi_{i})\gamma_i^{-,n}v_{ij}^{L,n}(U_i^n-U_{i}^{m,n}).
\end{align*}
where the last inequality follows from $\psi_i^n \le \psi_{ij}^n$.
Substituting $U_i^n = \theta_i^nU_i^{M,n}+(1-\theta_i^n)U_i^{m,n}$ into the above inequality leads to
\[	U^{H,n+1}_i \le U_i^{M,n}-(U_i^{M,n}-U_i^{m,n})\left((1-k\gamma_i^n)(1-\theta_i^n)-\theta_i^n(1-\psi_i)\gamma_{i}^{-,n}\right).\]
The other inequality is obtained similarly.

\end{proof}

\subsubsection{Greedy limiting}
According to  \Cref{psi}, the local maximum principle at the level of DOFs is ensured once
\begin{equation}
	(1 - \theta_{i}^{n})(1 -k \gamma_{i}^{n}) - \theta_{i}^{n}(1 - \psi_{i}^{n})\gamma_{i}^{-,n} \ge 0,\quad \theta_{i}^{n} (1 -k \gamma_{i}^{n}) - (1 - \theta_{i}^{n})(1 - \psi_{i}^{n})  \gamma_{i}^{+, n} \ge 0.
\end{equation}
Thus, we can define 
\begin{equation}
	\psi_i^n := \max\left(1-(1 - k\gamma_{i}^{n})\min(\frac{(1 - \theta_{i}^{n})}{\theta_i^n\gamma_i^{-,n}},\frac{\theta_{i}^{n}}{(1-\theta_i^n)\gamma_i^{+,n}}),0\right).
\end{equation}
We select $k=1$ to minimize $\psi_i^n$, then the scheme with the greedy viscosity $V^n:=\Psi^n*V^{L,n}$ satisfies the local maximum principle with respect to DOFs, as established in Lemma \ref{psi}, provided $\gamma_i^n\le 1$. This requirement coincides with the CFL condition~\eqref{CFL} of the scheme employing the minimum viscosity.

\subsubsection{Flux corrected transport (FCT) viscosity}
\label{sec:FCT}
In this subsection, we introduce the flux-corrected transport (FCT) technique \cite{MR1486270, MR534786}, which attains high-order accuracy while preserving bounds at the DOFs.
The central idea is to correct the Galerkin solution using the low-order one so that the midpoint values remain within the admissible range $[U_i^{\min}, U_i^{\max}]$.

Let $U^{L,n+1} \in \mathbb{R}^I$ and $U^{H,n+1} \in \mathbb{R}^I$ denote the low- and high-order solutions, respectively, where the former is defined by (\ref{forward_mass}) and the latter corresponds to the Galerkin solution (\ref{forwardA}).
Subtracting the two expressions gives the relation between the high- and low-order solutions:
\begin{equation}
	U^{H,n+1} = U^{L,n+1} - \Delta t^n M^{-1}V^{L,n}U^n,
\end{equation}
that is, componentwise,
\begin{equation}
\begin{aligned}
	U_i^{H,n+1} =& U_i^{L,n+1}-\frac{\Delta t^n}{m_i}\sum_{j\in\mathcal{I}(S_i)}v^{L,n}_{ij}U_j^{n}\\
 =&U_i^{L,n+1}-\frac{\Delta t^n}{m_i}\sum_{j\in\mathcal{I}(S_i)}v^{L,n}_{ij}(U_j^{n}-U_i^n),
\end{aligned}
\end{equation}
where the last equality follows from  $\sum_{j\in\mathcal{I}(S_i)}v_{ij}^{L,n}=0$. Denote $t_{ij}^n := -v_{ij}^{L,n}(U_j^n-U_i^n)$.
Since it is not guaranteed that $U^{H,n+1}$ satisfies the discrete maximum principle, we introduce a  correction as follows
\begin{equation}
	U_i^{n+1} = U_i^{L,n+1}+\frac{\Delta t^n}{m_i}\sum_{j\in\mathcal{I}(S_i)}l_{ij}^nt_{ij}^n,
\end{equation}
where $l_{ij}^n \in [0,1]$ denote symmetric limiting coefficients,
i.e., $l_{ij}^n = l_{ji}^n$, to be determined.
Because $l_{ij}^n t_{ij}^n = -l_{ij}^n t_{ji}^n$, the corrected solution preserves the conservation property of the scheme, i.e., $\sum_{i=1}^N m_iU_i^{n+1} = \sum_{i=1}^N m_iU_i^{L,n+1}$.
To determine the limiting coefficients $l_{ij}^n$, we first introduce the auxiliary quantities $P^{\pm}_i,Q_i^{\pm}, R_i^{\pm}$
\begin{align}
  &P_i^+:=\sum_{j\in\mathcal{I}(S_i)}\max\{0,t_{ij}^n\},\quad P_i^-:=\sum_{j\in\mathcal{I}(S_i)}\min\{0,t_{ij}^n\},\label{p}\\
  &Q_i^+:=\frac{m_i}{ \Delta t^n}(U_{i}^{\max}-U_{i}^{L,n+1}),\quad Q_i^-:=\frac{m_i}{\Delta t^n}(U_{i}^{\min}-U_{i}^{L,n+1}),\\
    &R_i^+:=\begin{cases}
    \min\{1,\frac{Q_i^+}{P{_i^+}}\}&\text{if}\ P_i^+\ne 0,\\
    1 &\text{otherwise},
  \end{cases}\quad R_i^-:=\begin{cases}
    \min\{1,\frac{Q_i^-}{P{_i^-}}\}& \text{if}\ P_i^-\ne 0,\\
    1& \text{otherwise},\label{r}
  \end{cases}
\end{align}
where the values $U_i^{\min}$ and $U_i^{\max}$ denote the prescribed local bounds of the solution at the DOF $i$.
The limiting coefficients $l_{ij}^n$ are then defined by
 \begin{equation}
l_{ij}^n:=\begin{cases}
    \min\{R_i^+,R_j^-\}& t_{ij}\ge 0,\\
    \min\{R_i^-,R_j^+\}&\text{otherwise}.
  \end{cases}\label{lij}
 \end{equation}
\Cref{thm:maxprinciple} ensures that the modified solution $U^{n+1}$ remains bounded within the prescribed limits $[U_i^{\min}, U_i^{\max}]$.
Rewriting $U_i^{n+1}$, we obtain
	\begin{align*}
	U_i^{n+1} =& U_i^{L,n+1}-\frac{\Delta t^n}{m_i}\sum_{j\in\mathcal{I}(S_i)}l_{ij}^nt_{ij}^n=U_i^{L,n+1}-\frac{\Delta t^n}{m_i}\sum_{j\in\mathcal{I}(S_i)}l_{ij}^nv^{L,n}_{ij}(U_j^{n}-U_i^n)\\
	=& U_i^{n}-\frac{\Delta t^n}{m_i}\sum_{j\in\mathcal{I}(S_i)} \left(s_{ij}^{n}-(1-l_{ij}^n)v_{ij}^{L,n}\right)U_j^{n}.
	\end{align*}
	This formulation is thus equivalent to choosing the high-order viscosity as
\begin{equation}
	V^{H,n} = \Psi^n *V^{L,n},\quad \Psi^n = E-L^n.
\end{equation}

\subsection{Inflow boundary condition}\label{sec:Inflow}
We now consider the treatment of inflow boundary conditions. The scheme (\ref{forward}) can be rewritten as
\begin{equation}
	U^{n+1} = (I-\Delta t^n M^{-1}S^n)U^n+L^n,\label{forward_inflow}
\end{equation}
where $S^n$ is defined as before, and the $i$-th component $l_i^n$ of $L^n$ is given by $l_h(u_h^n,\varphi_i)$ from (\ref{lh}).
To obtain the low-order scheme, we simply neglect the inflow contribution $L^n$ and proceed as in \Cref{sec:viscosity}. For the high-order scheme, however, the term $L^n$ cannot be omitted, and the greedy viscosity cannot be directly applied. Instead, we propose a modified limiting procedure inspired by the FCT technique.
Similar to \Cref{sec:FCT}, we take the Galerkin solution as the high-order solution to be limited:
\begin{equation}U^{H,n+1}_i=U_i^{L,n+1}+\frac{\Delta t^n}{m_i}\sum_{j\in\mathcal{I}(S_i)}t_{ij}^n+\frac{\Delta t^n}{m_i}l_i^n.\end{equation}
We now limit both the numerical flux term and the inflow contribution by introducing the modified update
\begin{equation}
\begin{aligned}
	&	U_i^{n+1}=U_i^{L,n+1}+\frac{\Delta t^n}{m_i}\sum_j l_{ij}^nt_{ij}+\frac{\Delta t^n}{m_i}\alpha_i^n l_i^n,
\end{aligned}
\end{equation}
where $l_{ij}$ is defined though (\ref{p}), (\ref{r}) and (\ref{lij}) with 
\begin{equation}
	Q_i^-:=\frac{m_i}{\Delta t^n}(U_{i}^{\min}-U_i^{L,n+1})-\alpha_i^n l_i^n,\quad Q_i^+:=\frac{m_i}{\Delta t^n}(U_{i}^{\max}-U_i^{L,n+1})-\alpha_i^n l_i^n\label{q}
\end{equation}
and the inflow limiter
\begin{equation}
	\alpha_i^n:=\begin{cases}\min\{1,\max\{0,\frac{m_i}{\Delta t}\frac{U_{i}^{\max}-U_i^{L,n+1}}{l_i^n},\frac{m_i}{\Delta t}\frac{U_{i}^{\min}-U_i^{L,n+1}}{l_i^n}\}\}&\text{if }l_i\ne 0,\\1&\text{otherwise}.\end{cases}.\label{alpha}
\end{equation}

From the definition (\ref{alpha}), one observes that a smaller time step $\Delta t^n$ allows $\alpha_i^n$ to approach $1$, leading to a more relaxed limitation.
\begin{theorem}[Discrete maximum principle with respect to DOFs]\label{thm:maxprinciple}
Assume that the low-order solution satisfies 
$U_i^{\min}\le U_i^{L,n+1}\le U_i^{\max}$,
then after applying the above limiting procedure, the updated solution $U_i^{n+1}$ remains within the same bounds, i.e., $
U_i^{\min}\le U_i^{n+1}\le U_i^{\max}.$

In particular, when $l_i^n=0$ for all $i=1,2,\dots,N$, the same result holds for the FCT viscosity introduced in \Cref{sec:FCT}.
\end{theorem}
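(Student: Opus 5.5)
Our plan is to follow the classical Zalesak argument, with the inflow term handled first and then absorbed into shifted bounds. We write $c_i:=\frac{\Delta t^n}{m_i}$ and set $\widetilde U_i:=U_i^{L,n+1}+c_i\alpha_i^n l_i^n$. Then
\[
U_i^{n+1}=\widetilde U_i+c_i\sum_{j}l_{ij}^n t_{ij}^n ,
\]
and the new definitions \eqref{q} of $Q_i^\pm$ are exactly $Q_i^+=c_i^{-1}(U_i^{\max}-\widetilde U_i)$ and $Q_i^-=c_i^{-1}(U_i^{\min}-\widetilde U_i)$. The proof therefore splits into two steps.

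\textbf{Step 1: the inflow limiter keeps $\widetilde U_i$ within the bounds.} If $l_i^n=0$, then $\widetilde U_i=U_i^{L,n+1}$ and the hypothesis applies directly. Otherwise, consider the two quotients $a^\pm:=c_i^{-1}(U_i^{\max/\min}-U_i^{L,n+1})/l_i^n$. By hypothesis exactly one of them is nonnegative (up to degenerate zeros), namely the one whose numerator has the same sign as $l_i^n$. We take the case $l_i^n>0$; the other case is symmetric. Then $a^+\ge0\ge a^-$, so $\alpha_i^n=\min\{1,a^+\}\in[0,1]$. This gives
\[
c_i\alpha_i^n l_i^n\le U_i^{\max}-U_i^{L,n+1}
\]
and, since the correction is nonnegative, also $\widetilde U_i\ge U_i^{L,n+1}\ge U_i^{\min}$. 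Consequently $Q_i^+\ge0\ge Q_i^-$, and so $R_i^\pm\in[0,1]$. (We read the $\max$ in \eqref{alpha} as selecting the admissible side; this sign bookkeeping is the only place where the argument departs from the standard one.)

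\textbf{Step 2: the Zalesak limiter.} We bound the upper side; the lower side is symmetric. Since $l_{ij}^n\ge0$,
\[
\sum_j l_{ij}^n t_{ij}^n\le\sum_{j:\,t_{ij}\ge0} l_{ij}^n t_{ij}^n .
\]
For $t_{ij}\ge0$ we have $l_{ij}^n\le R_i^+$, so this sum is at most $R_i^+P_i^+$. If $P_i^+=0$ the sum vanishes. Otherwise $R_i^+P_i^+\le (Q_i^+/P_i^+)P_i^+=Q_i^+$. Hence
\[
U_i^{n+1}\le\widetilde U_i+c_iQ_i^+=U_i^{\max}.
\]
For the lower bound we use that, for $t_{ij}<0$, $l_{ij}^n\le R_i^-$, and that $R_i^-P_i^-\ge Q_i^-$ because both quantities are nonpositive and $R_i^-\le Q_i^-/P_i^-$. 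This yields $U_i^{n+1}\ge U_i^{\min}$. The symmetric choice $\min\{R_i^\pm,R_j^\mp\}$ is needed only for conservation, not for the bounds.

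\textbf{The FCT case.} When $l_i^n=0$ for all $i$, we get $\alpha_i^n l_i^n=0$, so $\widetilde U_i=U_i^{L,n+1}$ and the $Q_i^\pm$ reduce to those of \Cref{sec:FCT}. Step 2 then gives the FCT statement verbatim.

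The main obstacle is Step 1. We must verify that $\alpha_i^n$ as written lands in $[0,1]$ with the correct sign and never pushes $\widetilde U_i$ past the opposite bound; this is what guarantees $Q_i^+\ge0\ge Q_i^-$, without which $R_i^\pm$ could be negative and Step 2 would fail.
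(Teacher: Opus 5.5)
Your proof is correct and is essentially the paper's argument: the same Zalesak chain $\sum_j l_{ij}^n t_{ij}^n \ge R_i^- P_i^- \ge Q_i^-$ (and its upper analogue), with the inflow term absorbed through the shifted bounds $Q_i^\pm$ in \eqref{q}. Your Step 1 is more careful than the paper, whose first inequality tacitly uses $l_{ij}^n\ge 0$ without checking that $\alpha_i^n$ keeps $Q_i^+\ge 0\ge Q_i^-$ (hence $R_i^\pm\ge 0$). Also, the $\max$ in \eqref{alpha} already picks the admissible quotient literally, so no reinterpretation is needed.
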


\begin{proof}
From the definitions \eqref{p}-\eqref{r} and \eqref{alpha}, we have
\begin{align*}
U_i^{n+1}-U_i^{\min}
&=U_i^{L,n+1}-U_i^{\min}+\frac{\Delta t^n}{m_i}\left(\sum_{j\in\mathcal{I}(S_i)}l_{ij}^nt_{ij}^n+\alpha_i^n l_i^n\right)\\
&\ge U_i^{L,n+1}-U_i^{\min}+\frac{\Delta t^n}{m_i}\left(R_i^-\sum_{j\in\mathcal{I}(S_i)}\min(0,t_{ij}^n)+\alpha_i^n l_i^n\right)\\
&\ge U_i^{L,n+1}-U_i^{\min}+\frac{\Delta t^n}{m_i}\left(\frac{Q_i^-}{P_i^-}P_i^-+\alpha_i^n l_i^n\right)\\
&=U_i^{L,n+1}-U_i^{\min}+\frac{\Delta t^n}{m_i}\left(Q_i^-+\alpha_i^n l_i^n\right)\ge 0.
\end{align*}
Similarly, we obtain
\[U_i^{n+1}-U_i^{\max}\le U_i^{L,n+1}-U_i^{\max}+\frac{\Delta t^n}{m_i}\alpha_i^n l_i^n+\frac{\Delta t^n}{m_i}Q_i^+\le 0.\]
Hence, the discrete maximum principle holds.
\end{proof}

\section{Reconstruction of the solution on $\frac{h}2$-mesh}\label{sec:postprocess}
In the previous sections, we have developed several schemes that can preserve maximum principle with respect to DOFs, i.e., $u_{\min}\le \min_{i}U_i\le \max_iU_i\le u_{\max}$, where $u_{\min}$ and $u_{\max}$ denote the lower and upper bounds of the initial condition, respectively.  
Since the cell average is computed as the mean of the three midpoint values of its edges, it follows immediately that the cell averages inherit the same bounds as $\{U_i\}_{i=1}^N$.  Furthermore, the following theorem tell us that a solution satisfies this kind of maximum principle has a global lower bound $2u_{\min}-u_{\max}$ and upper bound $2u_{\max}-u_{\min}$.
	
\begin{theorem}[Global bound]
	Suppose that $u_h=\sum_{i=1}^NU_i\varphi_i\in \mathcal{V}_h$ with  $u_{\min}\le \min_{i}U_i\le \max_iU_i\le u_{\max}$, then 
	\begin{equation}
		2u_{\min}-u_{\max}\le u_h(\mathbf{x})\le 2u_{\max}-u_{\min}.
	\end{equation}
\end{theorem}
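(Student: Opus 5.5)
The plan is to work element by element: $u_h|_K$ is affine, so its extrema over the closed triangle $K$ are attained at the vertices. It therefore suffices to bound the vertex values of $u_h|_K$ in terms of the three midpoint values on $K$, which are DOFs of $u_h$.

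Let $K$ have vertices $\mathbf{a}_1,\mathbf{a}_2,\mathbf{a}_3$. Let $\mathbf{m}_k$ be the midpoint of the edge opposite $\mathbf{a}_k$, and let $U_{(k)}$ be the corresponding DOF value, so $u_h|_K(\mathbf{m}_k)=U_{(k)}$. On $K$ we can use the local basis
\[
\varphi_{(k)}|_K = 1-2\lambda_k,
\]
where $\lambda_k$ is the barycentric coordinate of $\mathbf{a}_k$. This function equals $1$ at $\mathbf{m}_k$ and $0$ at the other two midpoints. Evaluating $u_h|_K=\sum_k U_{(k)}(1-2\lambda_k)$ at vertex $\mathbf{a}_1$, where $\lambda_1=1$ and $\lambda_2=\lambda_3=0$, gives
\[
u_h(\mathbf{a}_1)=U_{(2)}+U_{(3)}-U_{(1)} .
\]
Equivalently, $\mathbf{a}_1=\mathbf{m}_2+\mathbf{m}_3-\mathbf{m}_1$, and affineness gives the same identity. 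The other vertices follow by symmetry.

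From the hypothesis $u_{\min}\le U_{(k)}\le u_{\max}$ we get
\[
u_h(\mathbf{a}_1)\le 2u_{\max}-u_{\min}
\qquad\text{and}\qquad
u_h(\mathbf{a}_1)\ge 2u_{\min}-u_{\max}.
\]
Since any $\mathbf{x}\in K$ satisfies $u_h(\mathbf{x})=\sum_k\lambda_k(\mathbf{x})\,u_h(\mathbf{a}_k)$, a convex combination of the vertex values, the same bounds hold on all of $K$. Taking the union over $K\in\mathcal{K}_h$ gives the claim.

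Where $u_h$ is discontinuous across an edge, the pointwise value is understood elementwise, and the bound holds for either trace. Boundary edges need no special treatment, since their midpoint values are still DOFs within the given bounds.

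The only step requiring care is the local vertex identity; everything else is immediate. I would verify it directly from the barycentric form of the CR basis.
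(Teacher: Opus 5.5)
Your proposal is correct and follows essentially the same route as the paper. Both restrict to one element, use affineness to reduce to the vertex values, and write each vertex value as $U_{(2)}+U_{(3)}-U_{(1)}$ to get the bounds. Your local basis $1-2\lambda_k$ is the right one; the paper's $1-\lambda_s$ is a slip, although the vertex identity the paper states agrees with yours.
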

\begin{proof}
	The solution on cell $K\in \mathcal{K}_h$ can be expressed as 
\[
	u_h|_K(x) = \sum_{s=1}^3U_{c_K(s)}\varphi_{c_K(s)}(x)=\sum_{s=1}^3U_{c_K(s)}(1-\lambda_s),
\]
where $\lambda_s$ denotes the barycentric coordinate of $\mathbf{x}$ in $K$ and $c_K(s)$ is the global index corresponding to the local vertex $s$.  
Since $u_h|_K$ is affine in $\mathbf{x}$, its extrema are attained at the vertices of $K$.  
Assume, without loss of generality, that the extremum occurs at $\mathbf{x}_{c_K(1)}$, where $\lambda_1=1$ and $\lambda_2=\lambda_3=0$.  
Then $u_h|_K(\mathbf{x}_{c_K(1)}) = -U_{c_K(1)} + U_{c_K(2)} + U_{c_K(3)}$.
Using $u_{\min}\le U_i\le u_{\max}$ for all $i$, we obtain
\[
2u_{\min}-u_{\max}\le u_h|_K(\mathbf{x})\le 2u_{\max}-u_{\min}, 
\qquad \forall K\in\mathcal{K}_h,
\]
which completes the proof.
\end{proof}

Unfortunately, a sharper bound cannot be obtained. Consider the discontinuous function $u(\mathbf{x})$ defined on $[-1,1]^2$ by
$u(\mathbf{x})=\begin{cases}
	1&\text{if}\  x>0,\\
	-1&\text{otherwise}.
\end{cases}$
We construct a uniform triangular mesh over $[-1,1]^2$ with 2k + 1 ($k\in\mathbb{N}_+$) grid points in each spatial direction. For all midpoints on the left side of the domain, the CR interpolation $u_h(\mathbf{x})$ takes the value $-1$ while for the remaining midpoints it takes the value $1$. Now, consider the mesh element adjacent to the line $x=0$, whose one edge lies on $x=0$ and the other two edges lie on the left side; see \Cref{fig:counter}. It is straightforward to verify that the minimum value of $u_h(\mathbf{x})$ on this element is $-3$, so the lower bound $2u_{\min}-u_{\max}$ is attained. Similarly, we can construct another example to reach the upper bound $2u_{\max}-u_{\min}$.
This indicates that the CR interpolation may exhibit non-decaying oscillations near discontinuities, regardless of mesh refinement. This behavior is analogous to the Runge phenomenon in polynomial interpolation and the Gibbs phenomenon in Fourier series expansions.

\begin{figure}[htbp]
    \centering
    \begin{minipage}[t]{0.48\textwidth}
        \centering
        \resizebox{\linewidth}{!}{%
        \begin{tikzpicture}[scale=4.55]
        
            \draw (0,0) rectangle (1,1);
        \node[left] at (0,0) {$-1$};
        \node[right] at (1,0) {$1$};
        \node[left] at (0,1) {$1$};
        
        \foreach \i in {0.25,0.75} {
            \draw (\i,0) -- (\i,1);
            \draw (0,\i) -- (1,\i);
        }
        \draw (0,0.5) -- (1,0.5);

        \foreach \x in {0.25,0.75} {
            \foreach \y in {0,0.25,0.5,0.75} {
                \draw (\x,\y) -- (\x+0.25,\y+0.25);
            }
        }
        \foreach \x in {0,0.5} {
            \foreach \y in {0.25,0.75} {
                \draw (\x,\y) -- (\x+0.25,\y+0.25);
            }
        }
        \foreach \x in {0,0.5} {
            \foreach \y in {0,0.5} {
                \draw (\x,\y) -- (\x+0.5,\y+0.5);
            }
        }
        
        \draw[red,thick] (0.5,-0.1) -- (0.5,1.1);
        \node[red,above] at (0.5,1.1) {$x=0$};
        \fill[blue!15] (0.25,0.5) -- (0.5,0.5) -- (0.5,0.75) -- cycle;
        \draw[blue] (0.375,0.5) circle (0.4pt);
        \draw[blue] (0.5,0.625) circle (0.4pt);
        \draw[blue] (0.375,0.625) circle (0.4pt);
        \node[blue,below] at (0.375,0.5) {$-1$};
        \node[blue,right] at (0.5,0.625) {$1$};
        \node[blue,above left] at (0.375,0.625) {$-1$};

        \filldraw[red] (0.25,0.5) circle (0.4pt);
        \node[red,below left] at (0.25,0.6) {$-3$};
        \makeatletter
        \pgfresetboundingbox
        \path[use as bounding box] (-0.35,-0.35) rectangle (1.35,1.35);
        \makeatother
        \end{tikzpicture}%
        }\\[0.08\baselineskip]
        \refstepcounter{figure}\label{fig:counter}
        \par\small\sloppy\noindent\begingroup
        \hangafter=1\relax\hangindent=2.4em\relax
        \noindent\textbf{\figurename~\thefigure.} The CR interpolation attains a value of $-3$ at the lower-left vertex of the blue triangle.
        \par\endgroup
    \end{minipage}
    \hfill
    \begin{minipage}[t]{0.48\textwidth}
        \centering
        \resizebox{\linewidth}{!}{%
        \begin{tikzpicture}[scale=4.55]
        
        \draw (0,0) rectangle (1,1);

        \fill[red!15] (0.25,0.25) -- (0.5,0.25) -- (0.75,0.5) -- (0.75,0.75) -- (0.5,0.75) -- (0.25,0.5) -- cycle;

        \foreach \i in {0.25,0.75} {
            \draw[dashed] (\i,0) -- (\i,1);
            \draw[dashed] (0,\i) -- (1,\i);
        }
        \draw (0.5,0) -- (0.5,1);
        \draw (0,0.5) -- (1,0.5);  

        \foreach \x in {0.25,0.75} {
            \foreach \y in {0,0.25,0.5,0.75} {
                \draw[dashed] (\x,\y) -- (\x+0.25,\y+0.25);
            }
        }
        \foreach \x in {0,0.5} {
            \foreach \y in {0.25,0.75} {
                \draw[dashed] (\x,\y) -- (\x+0.25,\y+0.25);
            }
        }
        \foreach \x in {0,0.5} {
            \foreach \y in {0,0.5} {
                \draw (\x,\y) -- (\x+0.5,\y+0.5);
            }
        }
        
        \foreach \x in {0.25,0.75} {
            \foreach \y in {0,0.25,0.5,0.75,1} {
                \draw[blue] (\x,\y) circle (0.4pt);
            }
        }
        \foreach \x in {0,0.5,1} {
            \foreach \y in {0.25,0.75} {
                \draw[blue] (\x,\y) circle (0.4pt);
            }
        }

        \node[left] at (0.25,0.25) {$\mathbf{y}_6$};
        \node[below left] at (0.5,0.25) {$\mathbf{y}_1$};
        \node[below right] at (0.75,0.5) {$\mathbf{y}_2$};
        \node[right] at (0.75,0.75) {$\mathbf{y}_3$};
        \node[above right] at (0.5,0.75) {$\mathbf{y}_4$};
        \node[above left] at (0.25,0.5) {$\mathbf{y}_5$};
        \node[below right] at (0.5,0.5) {$\mathbf{y}_0$};
        \makeatletter
        \pgfresetboundingbox
        \path[use as bounding box] (-0.35,-0.35) rectangle (1.35,1.35);
        \makeatother
        \end{tikzpicture}%
        }\\[0.08\baselineskip]
        \refstepcounter{figure}\label{fig:square_partition}
        \par\small\sloppy\noindent\begingroup
        \hangafter=1\relax\hangindent=2.4em\relax
        \noindent\textbf{\figurename~\thefigure.} The $h$-mesh (solid lines), the $\tfrac{h}{2}$-mesh (dashed lines), and the polygon around $\mathbf{y}_0$ (red region). Blue points denote edge midpoints of the $h$-mesh.
        \par\endgroup
    \end{minipage}
\end{figure}

To eliminate these oscillations, we introduce a post-processing technique designed to smooth the solution. The main idea is to reconstruct a new continuous $P_1$ solution on a finer mesh from the CR solution on the original mesh. 
The finer mesh here is  obtained by subdividing each triangle into four sub-triangles via connecting the midpoints of its edges. We refer to this refined mesh as the  $\frac{h}2$-mesh, while the original mesh is called the $h$-mesh; see \Cref{fig:square_partition}.



The Wachspress coordinates, introduced by Wachspress in the 1975 \cite{MR426460}, are a type of generalized barycentric coordinates defined over convex polygon.  Let $T$ be an $s$-sided convex polygon with vertices $\{\mathbf{y}_i\}_{i=1}^s$, the Wachspress coordinates can be defined by the formula \begin{equation}
	w_i(\mathbf{x}):=\frac{\tilde{w}_i(\mathbf{x})}{\sum_{j=1}^s\tilde{w}_j(\mathbf{x})},\quad \tilde{w}_i(\mathbf{x})=\frac{\mathcal{A}(\mathbf{y}_{i-1},\mathbf{y}_i,\mathbf{y}_{i+1})}{\mathcal{A}(\mathbf{x},\mathbf{y}_{i-1},\mathbf{y}_i)\mathcal{A}(\mathbf{x},\mathbf{y}_{i-1},\mathbf{y}_i)},
\end{equation}
where $\mathcal{A}(\mathbf{a},\mathbf{b},\mathbf{c}):=\frac{1}2\left|\begin{matrix}
1 & 1 & 1 \\
a_1 & b_1 & c_1 \\
a_2 & b_2 & c_2
\end{matrix}\right|$ denotes the signed area of the triangle formed by points $\mathbf{a},\mathbf{b},\mathbf{c}$.
The Wachspress coordinates possess the properties of non-negativity, partition of unity and linear precision, that is, for any $\mathbf{x}\in T$, 
$0\le w_i(\mathbf{x})\le 1 \ (i=1,2,\cdots, s)$, $\sum_{i=1}^sw_i(\mathbf{x})=1$ and $l(\mathbf{x})=\sum_{i=1}^sw_i(\mathbf{x})l(\mathbf{y}_i)$, for  any linear function  $l(\mathbf{x})$ defined $\overline{T}$. In general, the polygon obtained by connecting all the neighboring vertices on the $\frac{h}2$-mesh is star-shaped. Although such polygon may not be convex,  this issue can be handled using mean value coordinates \cite{MR3218572} instead. Since  the uniform meshes employed in our numerical tests satisfy the convexity assumption, we will focus on the Wachspress coordinates for simplicity.

We now specify the point values for all vertices on the $\frac{h}2$-mesh as follows:
\begin{enumerate}
\item Midpoints of edges on the $h$-mesh: take the CR solution values at these points.
\item Vertices of the $h$-mesh with more than two adjacent edge midpoints: compute the Wachspress coordinates of these vertices with respect to the convex polygon formed by connecting their neighboring vertices on the $\frac{h}2$-mesh (see \Cref{fig:square_partition}), and use these coordinates as combination coefficients to evaluate the interpolated value.
\item Remain points: take the CR solution values  or calculate from the boundary value function if it exists, since all the remain points lies on the boundary. 
\end{enumerate}
We denote the above operation as $\mathcal{I}^W:\mathcal{V}_h\rightarrow\mathcal{W}_{h/2}:=\{v_h\in P(\mathcal{K}_{h/2}):v_h\in C^0(\Omega)\}$, where $\mathcal{K}_{h/2}$ is the triangulation of the $\frac{h}2$-mesh.

	In our numerical experiments, we only reconstruct the solution at the final time since at each time step only the degrees of freedom of the CR element are needed.

\begin{theorem}[Discrete maximum principle on the whole domain]{}
	Suppose $u_h=\sum_{i=1}^NU_i\varphi_i\in \mathcal{V}_h$ satisfies that
	$u_{\min}\le \min_{i}U_i\le \max_iU_i\le u_{\max}$, then the reconstructed $P_1$ solution $\mathcal{I}u_h\in \mathcal{W}_{h/2}$ preserves this bound on the whole domain, i.e.,
	\begin{equation}
		u_{\min}\le \mathcal{I}(u_h)\le u_{\max}.
	\end{equation}
Here for simplicity, we further assume that the boundary values of $\mathcal{I}^W(u_h)$ are all computed by a given boundary function $u|_{\p\Omega}$ satisfying $u_{\min}\le u|_{\p\Omega}\le u_{\max}$.

\end{theorem}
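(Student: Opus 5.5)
The argument reduces the global statement to a statement about nodal values of the reconstruction. The reconstruction $\mathcal{I}^W(u_h)$ lies in $\mathcal{W}_{h/2}$, so it is continuous and affine on every sub-triangle of $\mathcal{K}_{h/2}$. An affine function on a triangle attains its extrema at the vertices, because its value at any point is a convex combination (barycentric coordinates) of its vertex values. It therefore suffices to show that every vertex value of $\mathcal{I}^W(u_h)$ on the $\frac{h}{2}$-mesh lies in $[u_{\min},u_{\max}]$.

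The vertices of the $\frac{h}{2}$-mesh fall into the three classes used in the definition of $\mathcal{I}^W$, and each is checked in turn.
\begin{enumerate}
\item \emph{Edge midpoints of the $h$-mesh.} The value is the CR degree of freedom $U_i$, which lies in $[u_{\min},u_{\max}]$ by hypothesis.
\item \emph{Boundary points.} By the simplifying assumption in the statement, the value is $u|_{\p\Omega}$ evaluated there, which lies in $[u_{\min},u_{\max}]$.
\item \emph{Interior vertices $\mathbf{y}_0$ of the $h$-mesh.} The value is $\sum_{k} w_k(\mathbf{y}_0)\, v(\mathbf{y}_k)$, where $\mathbf{y}_k$ are the vertices of the surrounding polygon on the $\frac{h}{2}$-mesh.
\end{enumerate}

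For the third class, note that the neighbours $\mathbf{y}_k$ of an $h$-vertex on the $\frac{h}{2}$-mesh are exactly the midpoints of the $h$-edges incident to $\mathbf{y}_0$. In Figure~\ref{fig:square_partition}, $\mathbf{y}_1,\dots,\mathbf{y}_6$ are such midpoints, so their values are CR values $U_j$. By the non-negativity and partition of unity of Wachspress coordinates on the convex polygon (with $\mathbf{y}_0$ interior), the value at $\mathbf{y}_0$ is a convex combination of numbers in $[u_{\min},u_{\max}]$, hence it lies in that interval.

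With all vertex values bounded, each affine piece of $\mathcal{I}^W(u_h)$ is bounded on its sub-triangle, and taking the union over $\mathcal{K}_{h/2}$ gives $u_{\min}\le\mathcal{I}^W(u_h)\le u_{\max}$ on $\overline{\Omega}$.

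The main obstacle is the bookkeeping in the third class: one must verify that the polygon's vertices are exactly $h$-edge midpoints, so that no other reconstructed vertex values enter, and that $\mathbf{y}_0$ lies in the polygon's interior so the coordinates are well defined and nonnegative. For an interior vertex, each $h$-edge incident to $\mathbf{y}_0$ is split at its midpoint, and the $\frac{h}{2}$-edges from $\mathbf{y}_0$ end at these midpoints, so the vertices of the star polygon are exactly those midpoints. The convexity assumption stated in the paper then gives non-negativity. Without convexity, the same argument would go through with mean value coordinates, which are also nonnegative for points in the kernel of a star-shaped polygon.
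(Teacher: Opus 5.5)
Your proof is correct and follows the same route as the paper: first bound every vertex value of the reconstruction on the $\frac{h}{2}$-mesh, then use that a piecewise linear Lagrange function attains its extrema on each cell at the vertices. The only difference is that you spell out the vertex-by-vertex check that the paper calls ``easy to see'': CR values at the midpoints, boundary data at boundary points, and Wachspress convex combinations of midpoint values at interior $h$-vertices.
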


\begin{proof}
	Under the assumptions, it is easy to see that the values of $\mathcal{I}(u_h)$ at all vertices of the $\frac{h}2$-mesh are within the interval $[u_{\min},u_{\max}]$. Since $\mathcal{I}(u_h)$ is a Lagrange $P_1$ function, the extrema of $\mathcal{I}(u_h)$ on each cell are necessarily found among the three vertices of the element. Thus we have the conclusion. 
\end{proof}

The discrete maximum principle over the entire domain for the scalar conservation law is crucial in coupled problems, where this solution acts as a coefficient in other equations that rely on boundedness for well-posedness—for example, the density must remain positive in compressible flow.

To establish an accuracy estimate for the reconstructed solution,  we  partition the domain into two types of regions associated with the  $\frac{h}2$-mesh. Let $\mathcal{J}_h\subset\mathcal{K}_{\frac{h}2}$ 
 denote the subset of triangles whose three vertex values are directly taken from the CR solution $u$, and let $\mathcal{T}_h$ denote the set of convex polygons,
each of which is formed by connecting the neighboring edge midpoints surrounding an interior vertex of the $h$-mesh.
\begin{lemma}[Error estimate of the reconstruction]
	Assume that $u\in H^2(\Omega)$,  $\mathcal{I}^{CR}:H^2(\Omega)\rightarrow\mathcal{V}_h$ is the CR interpolation operator satisfies $\mathcal{I}^{CR}(u)(\mathbf{x}_i)=u(\mathbf{x}_i)$, $\forall\ i=1,2,\cdots , N$. Then we have
	\begin{equation}
		\|u-\mathcal{I}^W(\mathcal{I}^{CR}(u))\|_{L^2(\Omega)}\le Ch^2|u|_{H^2(\Omega)},
	\end{equation}
where $C$ is a constant  depends on the  maximum number of sides among the polygons in the $\mathcal{T}_h$.

	Here, for  simplicity, the boundary values of $\mathcal{I}^W(\mathcal{I}^{CR}u)$ are taken directly from the original function $u$. 
\end{lemma}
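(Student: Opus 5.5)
The plan is a local Bramble--Hilbert argument on each piece of the $\frac{h}{2}$-mesh, using linear precision of the Wachspress coordinates in place of exactness for linears. Write $\Pi u := \mathcal{I}^W(\mathcal{I}^{CR}(u))$. Its vertex values on the $\frac{h}{2}$-mesh are of three kinds. At edge midpoints $\mathbf{x}_i$ they equal $u(\mathbf{x}_i)$. At interior vertices $\mathbf{y}_0$ of the $h$-mesh they equal $\sum_i w_i(\mathbf{y}_0)u(\mathbf{y}_i)$. At the remaining boundary points they equal $u$ itself. Hence $\Pi u$ is determined by point values of $u$, which are well defined since $H^2(\Omega)\hookrightarrow C^0(\overline\Omega)$ in two dimensions. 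I would cover $\Omega$ by patches $\omega$ of bounded overlap: the triangles of $\mathcal{J}_h$, and for each polygon $T\in\mathcal{T}_h$ the union of the $\frac{h}{2}$-triangles inside $T$. Every $\frac{h}{2}$-triangle lies in one of these. The goal is the local bound $\|u-\Pi u\|_{L^2(\omega)}\le Ch^2|u|_{H^2(\hat\omega)}$, where $\hat\omega$ is $\omega$ or a small neighbourhood of it. Summing over patches, with finite overlap guaranteed by shape regularity, then gives the claim.

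The local bound rests on two properties of $\Pi$ restricted to a patch.

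\textbf{Polynomial invariance.} If $u$ is linear on $\hat\omega$, then $\Pi u=u$ on $\omega$. At midpoints and boundary points this is immediate. At $\mathbf{y}_0$, linear precision gives $\sum_i w_i(\mathbf{y}_0)\,l(\mathbf{y}_i)=l(\mathbf{y}_0)$. So all vertex values of $\Pi u$ agree with those of the linear function, and the continuous $P_1$ interpolant reproduces it.

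\textbf{Stability.} By non-negativity and partition of unity, the value at $\mathbf{y}_0$ is bounded by $\max_i|u(\mathbf{y}_i)|$. The $P_1$ nodal basis is bounded in $L^\infty$. Therefore
\[
\|\Pi u\|_{L^\infty(\omega)}\le \|u\|_{L^\infty(\hat\omega)} .
\]

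Then, for any $p\in P_1$,
\[
\|u-\Pi u\|_{L^2(\omega)}\le \|u-p\|_{L^2(\omega)}+\|\Pi(u-p)\|_{L^2(\omega)}\le C|\omega|^{1/2}\|u-p\|_{L^\infty(\hat\omega)} .
\]
Next I map to a reference configuration of diameter one, apply the Sobolev embedding $H^2\hookrightarrow L^\infty$ there, and take $p$ from the Bramble--Hilbert lemma. The standard scaling $\|v\|_{L^\infty}\lesssim h^{-1}\|v\|_{L^2}+|v|_{H^1}+h|v|_{H^2}$ then yields $\|u-p\|_{L^\infty(\hat\omega)}\le Ch\,|u|_{H^2(\hat\omega)}$. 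Since $|\omega|^{1/2}\sim h$, the local bound $Ch^2|u|_{H^2(\hat\omega)}$ follows.

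The main obstacle is making the constants uniform over patches. For $\mathcal{J}_h$ triangles shape regularity suffices. For the polygons $T$, the reference configuration is not a single fixed shape: it depends on the number $s$ of sides and on the geometry. I would argue that $T$ is star-shaped with respect to a ball of radius $\sim h$ centred at $\mathbf{y}_0$, since by shape regularity every adjacent $h$-triangle contains such a ball around part of $\mathbf{y}_0$. For such domains the Bramble--Hilbert and embedding constants depend only on the chunkiness parameter, and hence only on shape regularity and $s$. This is where the dependence on the maximum number of sides in the statement enters. On the boundary, patches receive values from $u|_{\partial\Omega}$ directly, which fits the same invariance/stability framework without change.
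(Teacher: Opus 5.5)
Your proposal is correct and follows essentially the same route as the paper: you use the same splitting into the triangles of $\mathcal{J}_h$ and the polygons $T\in\mathcal{T}_h$, linear precision of the Wachspress coordinates for $P_1$-invariance of the local interpolant, and a Bramble--Hilbert/scaling argument for the local $h^2$ bound. The paper packages the polygon step as an interpolation operator $\mathcal{I}^{Q(T)}$ onto a local space $Q(T)\supset P_1(T)$ and just invokes the reasoning of a textbook local interpolation theorem, so your explicit $L^\infty$-stability bound and star-shapedness (chunkiness) argument for uniform constants supply details the paper leaves implicit.
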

\begin{proof}
	Since  $H^2(\Omega)\hookrightarrow C^0(\Omega)$,  $u$ has a continuous version and the CR interpolation $\mathcal{I}^{CR}(u)$ is well-defined.
	By  the local interpolation error estimate in \cite[Theorem 11.13]{MR4242224}, we obtain
	\begin{equation}\norm{u-\mathcal{I}^{W}(\mathcal{I}^{CR}(u))}_{L^2(K)}\le C_1h^2|u|_{H^2(K)},\quad \forall\ K\in \mathcal{J}_h,\label{onJ_h}\end{equation}
	where the constant $C_1$ is independent of $h$.

	For each polygonal $T\in\mathcal{T}_h$, denote the vertices as $\{\mathbf{y}_j\}_{j=1}^s$ (where $s$ is the number of sides of $T$), and denote by $\{K_j\}_{j=1}^s$ the triangulation of $T$ formed by connecting all $\mathbf{y}_j$ to the interior vertex $\mathbf{y}_0$ of $T$: $
	K_i=\triangle \mathbf{y}_0\mathbf{y}_j\mathbf{y}_{j+1}$ with the convention $\mathbf{y}_{s+1}=\mathbf{y}_1$.
	Define a finite dimensional space
	\[Q(T):=\{v_h|_{K_j}\in P_1(K_j):v_h\in C^0(T), v_h(\mathbf{y}_0)=\sum_{j=1}^sw_jv_h(\mathbf{y}_j)\}	
\supsetneqq P_1(T),\]
	where $\{w_j\}_{j=1}^s$ are the Wachspress coordinates of $\mathbf{y}_0$ with respect to the the vertices $\{\mathbf{y}_j\}_{j=1}^s$. It's easy to prove that $\mathrm{dim}\ Q(T)=s$  with basis functions $\{\theta_i\}_{i=1}^s$ defined by
	\[\theta_i(\mathbf{y}_j)=\delta_{ij},\quad \theta_i(\mathbf{\mathbf{y}_0})=w_i, \quad i,j=1,2,\cdots, s.\]
	For $v\in C^0(T)$, define the local interpolation operator $\mathcal{I}^{Q(T)}:C^0(T)\rightarrow Q(T)$  by 
	\[\mathcal{I}^{Q(T)}(v)(\mathbf{y}_j)=v(\mathbf{y}_j),\  j=1,2,\cdots,s,\quad \mathcal{I}^{Q(T)}(v)(\mathbf{y}_0)=\sum_{j=1}^sw_jv(\mathbf{y}_j).\]
	Clearly, $P_1(T)$ is pointwise invariant under $\mathcal{I}^{Q(T)}$. 
	Following the same reasoning as in \cite[Theorem 11.13]{MR4242224}, we obtain
	\begin{equation}\norm{u-\mathcal{I}^{W}(\mathcal{I}^{CR}(u))}_{L^2(T)}=\norm{u-\mathcal{I}^{Q(T)}(u)}_{L^2(T)}\le C_2h^2|u|_{H^2(T)},\quad \forall\ T\in \mathcal{T}_h,\label{onT_h}\end{equation}
	where $C_2$ is independent of $h$, but is relative to the number of sides of the polygon $T$.  

	Combined (\ref{onJ_h}) and (\ref{onT_h}), we have
	\[\norm{u-\mathcal{I}^{W}(\mathcal{I}^{CR}(u))}_{L^2(\Omega)}\le Ch^2|u|_{H^2(\Omega)}, \]
	where $C$ is independent of $h$ but depends on the  maximum number of sides among the polygons in the $\mathcal{T}_h$.

\end{proof}





\begin{theorem}[Second-order accuracy]
	Assume that $u\in H^2(\Omega)$, $u_h\in \mathcal{V}_h$ is a numerical solution satisfying $\norm{u-u_h}_{L^2(\Omega)}\le Ch^2|u|_{H^2(\Omega)}$, then the reconstructed solution $\mathcal{I}^W(u_h)\in \mathcal{W}_{\frac{h}2}$ also has second-order accuracy, i.e.,
	\[\norm{u-\mathcal{I}^W(u_h)}_{L^2(\Omega)}\le Ch^2|u|_{H^2(\Omega)},\]
	where $C$ is a constant  depends on the  maximum number of sides among the polygons in the $\mathcal{T}_h$.

	For simplicity, we  assume that the boundary values of $\mathcal{I}^W(u_h)$ are all taken from the solution $u$ (or the inflow function).
\end{theorem}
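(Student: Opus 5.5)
We split the error by the triangle inequality,
\[
\norm{u-\mathcal{I}^W(u_h)}_{L^2(\Omega)}\le \norm{u-\mathcal{I}^W(\mathcal{I}^{CR}u)}_{L^2(\Omega)}+\norm{\mathcal{I}^W(\mathcal{I}^{CR}u-u_h)}_{L^2(\Omega)}.
\]
The first term is bounded by $Ch^2|u|_{H^2(\Omega)}$ by the preceding lemma on the error of the reconstruction. Because the boundary values of both reconstructions are taken from $u$, they coincide and cancel. Hence the second term only involves the linear map $\mathcal{I}^W$, acting on $e_h:=\mathcal{I}^{CR}u-u_h\in\mathcal{V}_h$ with zero boundary data. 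Everything therefore reduces to an $L^2$-stability estimate $\norm{\mathcal{I}^W v_h}_{L^2(\Omega)}\le C\norm{v_h}_{L^2(\Omega)}$ for $v_h\in\mathcal{V}_h$.

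For the stability we work locally. On each subtriangle of the $\frac h2$-mesh, $\mathcal{I}^W v_h$ is $P_1$, so by norm equivalence on the reference element its squared $L^2$ norm is comparable to the area times the sum of the squared vertex values. The vertex values are either CR nodal values $V_i$ at edge midpoints, or Wachspress averages $\sum_j w_j V_{i_j}$ at interior $h$-vertices; for the latter, nonnegativity and partition of unity give $|\sum_j w_jV_{i_j}|^2\le \sum_j w_jV_{i_j}^2\le\sum_j V_{i_j}^2$. Summing over subtriangles, and using shape regularity to bound how often each midpoint value appears (via the number of cells around a vertex, i.e. the maximal number of sides $s$ of polygons in $\mathcal{T}_h$), we get $\norm{\mathcal{I}^Wv_h}^2_{L^2(\Omega)}\le C\sum_i |S_i|\,V_i^2$. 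On the other hand, since the CR mass matrix is diagonal with $m_i=\frac13|S_i|$, we have exactly $\norm{v_h}^2_{L^2(\Omega)}=\sum_i m_iV_i^2=\frac13\sum_i|S_i|V_i^2$. This yields stability with a constant depending only on shape regularity and $s$, and it also justifies the introduction's claim that the $L^2$ norm is controlled by that of the CR solution.

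Finally, $\norm{e_h}_{L^2(\Omega)}\le \norm{\mathcal{I}^{CR}u-u}_{L^2(\Omega)}+\norm{u-u_h}_{L^2(\Omega)}\le Ch^2|u|_{H^2(\Omega)}$, using the standard CR interpolation estimate and the hypothesis on $u_h$. Combining the three steps gives the claim.

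The main obstacle is the bookkeeping in the stability step. We have to check that the local constants are uniform, which needs the polygons of $\mathcal{T}_h$ to be uniformly shape regular so that the reference-element comparison holds. We also have to treat boundary vertices correctly, where the zero-data assumption is essential.
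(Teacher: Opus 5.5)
Your proposal is correct and follows essentially the same route as the paper. Both arguments use the triangle inequality through $\mathcal{I}^W(\mathcal{I}^{CR}u)$, the reconstruction lemma for the first term, a local $L^2$-stability bound for $\mathcal{I}^W$ built from vertex values and a bound on the Wachspress-averaged value at interior $h$-vertices, and finally the CR interpolation estimate combined with the hypothesis on $u_h$. Your Jensen bound (constant $1$ instead of the paper's Cauchy--Schwarz factor $s$) is a small tidy-up of the same argument. So is your observation that the two reconstructions share the same boundary data, which lets you treat the boundary layer $\mathcal{D}_{\frac{h}2}$ uniformly rather than splitting it off as the paper does with only a brief justification.
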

\begin{proof}
	Firstly, we apply the triangle inequality and obtain
	\begin{equation}
		\norm{u-\mathcal{I}^{W}(u_h)}_{L^2(\Omega)}\le\norm{u-\mathcal{I}^W(\mathcal{I}^{CR}(u))}_{L^2(\Omega)}+\norm{\mathcal{I}^W(\mathcal{I}^{CR}(u))-\mathcal{I}^W(u_h)}_{L^2(\Omega)}\label{error}.
	\end{equation}
	The first term on the right hand side of (\ref{error}) has been estimated in the previous lemma.
	
	Secondly, we want to prove that
	\[\norm{\mathcal{I}^W(v_h)}_{L^2(\Omega-\cup_{K\in\mathcal{D}_{\frac{h}2}}K)}\le C\norm{v_h}_{L^2(\Omega)},\]
	where $\mathcal{D}_{\frac{h}2}$ consists of all triangles that have  all triangles in the triangulation $\mathcal{K}_{\frac{h}2}$ that intersect  $\partial \Omega$.
	For any $T\in\mathcal{T}_h$,
	\begin{align*}
		\int_T(\mathcal{I}^W(v_h))^2\, \mathrm{d}\mathbf{x}=&\sum_{j=1}^s\int_{K_j}(\mathcal{I}^W(v_h))^2\, \mathrm{d}\mathbf{x}\\
		\le& \sum_{j=1}^s\frac{|K_j|}3\left(\left(v_h(\mathbf{y}_j)\right)^2+\left(v_h(\mathbf{y}_{j+1})\right)^2+\left(v_h(\mathbf{y}_0)\right)^2\right),
	\end{align*}
	Note that
	\begin{align*}
		\left(v_h(\mathbf{y}_0)\right)^2=\left(\sum_{j=1}^sw_jv_h(\mathbf{y}_j)\right)^2\le \left(\sum_{j=1}^sw_j^2\right)\left(\sum_{j=1}^sv_h(\mathbf{y}_j)^2\right)\le s\sum_{j=1}^s \left(v_h(\mathbf{y}_j)\right)^2,
	\end{align*}
	we have
	\begin{align*}
		\sum_{T\in \mathcal{T}_h}\int_T(\mathcal{I}^W(v_h))^2\, \mathrm{d}\mathbf{x}\le C\sum_{K\in\mathcal{K}_h}\frac{|K|}3\sum_{i=1}^3v_{c_K(i)}^2\le C\norm{v_h}_{L^2(\Omega)}^2.
	\end{align*}
Here and throughout the proof, the letter $C$ denotes a generic positive constant that may change from line to line, but is independent of $h$.
	Therefore, 
	\begin{align*}\norm{\mathcal{I}^{W}(v_h)}_{L^2(\Omega-\cup_{K\in\mathcal{D}_{\frac{h}2}}K)}^2=&\sum_{K\in\mathcal{J}_h, K\notin\mathcal{D}_{\frac{h}2}}\norm{\mathcal{I}^{W}(v_h)}_{L^2(K)}^2+\sum_{T\in\mathcal{T}_h}\norm{\mathcal{I}^{W}(v_h)}_{L^2(T)}^2\\
	\le&\sum_{K\in\mathcal{J}_h}\norm{v_h}_{L^2(K)}^2+\sum_{T\in\mathcal{T}_h}\norm{v_h}_{L^2(T)}^2
\le C \norm{v_h}_{L^2(\Omega)}^2,\end{align*}
Now we estimate the second term of (\ref{error}) :
	\begin{align*}
		&\norm{\mathcal{I}^W(\mathcal{I}^{CR}(u))-\mathcal{I}^W(u_h)}_{L^2(\Omega)}\\=&\norm{\mathcal{I}^W(\mathcal{I}^{CR}(u))-\mathcal{I}^W(u_h)}_{L^2(\cup_{K\in\mathcal{D}_{\frac{h}2}})} +\norm{\mathcal{I}^W(\mathcal{I}^{CR}(u))-\mathcal{I}^W(u_h)}_{L^2(\Omega-\cup_{K\in\mathcal{D}_{\frac{h}2}})}\\
		\le &C\norm{\mathcal{I}^{CR}(u)-u_h}_{L^2(\Omega)}+ \norm{\mathcal{I}^W\left(\mathcal{I}^{CR}(u)-\mathcal{I}^W(u_h)\right)}_{L^2(\Omega-\cup_{K\in\mathcal{D}_{\frac{h}2}})}\\
		\le &C\norm{\mathcal{I}^{CR}(u)-u_h}_{L^2(\Omega)}\\
		\le&C\norm{\mathcal{I}^{CR}(u)-u}_{L^2(\Omega)}+C\norm{u-u_h}_{L^2(\Omega)}\\
		\le &Ch^2|u|_{H^2(\Omega)}.
	\end{align*}

	Finally, combining all the above estimates yields the desired result.
\end{proof}

\section{Numerical tests}
\label{sec:numericaltests}
We present several numerical tests to verify the maximum-principle-preserving methods introduced above. All tests in this section use  uniform triangular meshes, and the low-order viscosity is the minimum viscosity.  The time-stepping method is the SSP RK(3,3) scheme described in Section \ref{sec:timediscretization}, combined with an automatic time-step control:

\begin{enumerate}
\item At each stage of the SSP RK(3,3), check whether the time step $\Delta t^n$ satisfies the CFL condition (\ref{CFL}).
\item If the condition is satisfied, proceed with the time advancement.
\item Otherwise, reduce the time step as $\Delta t^n \gets c\Delta t^n$, and restart from the first stage.
\end{enumerate}

In all experiments, the default value of the reduction factor is $c=0.5$, and the final time is set to $t=1$ unless otherwise specified. All tests are implemented using the MFEM library \cite{mfem-web}.

\subsection{Smooth rotation}
We first consider a smooth rotation problem on $\Omega = [-1,1]^2$ with velocity field   $\vec{\beta}=2\pi(-y,x)^T$ and the initial condition be
\begin{equation}u_0(\bm{x})=\frac12(1-\tanh(\frac{(\bm{x}-\bm{x}_0)^2}{r_0^2}-1)),\quad r_0=0.25, \bm{x}_0=(0.3,0),\end{equation}
so that the exact solution is given by
\begin{equation}
	u(t,x,y)=u_0(x\cos(2\pi t)+y\sin(2\pi t),-x\sin(2\pi t)+y\cos(2\pi t)).
\end{equation}

We test five mesh sizes $h = 0.1, 0.05, 0.025,0.0125$ and $0.00625$. Results are reported in  \Cref{test1}. In the first method (rows 2-5), we use greedy viscosity as the higher-order viscosity. In the second method (rows 6-9, labeled ``Loc. FCT visc.''), we define the local minimum $U_i^{\min}$ and the local maximum $U_i^{\max}$ by $U_i^{\min}=\min_{j\in\mathcal{I}(S_i)}\{U_j^{L,n+1}\}$ and $U_i^{\max}=\max_{j\in\mathcal{I}(S_i)}\{U_j^{L,n+1}\}$ in the FCT viscosity. In the third method (rows 10-13, labeled ``Glob. FCT visc.''), we define $U_i^{\min}=\min_{\mathbf{x\in\Omega}}u_0(\mathbf{x})$ and $U_i^{\max}=\max_{\mathbf{x\in\Omega}}u_0(\mathbf{x})$ in the FCT viscosity. And the last two rows come from \cite{MR3171280} with continuous ${P}_1$ element and entropy viscosity. All methods achieve optimal convergence rates in $L^2$-norm, and the CR solution together with its reconstruction are almost have the same behaviour. Our methods  on the $h$-meshes obtain results are comparable to that from continuous ${P}_1$ entropy viscosity method on the $\frac{h}2$-meshes but with fewer degrees of freedom.

\begin{table}[htbp]
    \centering
	\caption{Smooth rotation, convergence rates:  CR element (greedy/local FCT/global FCT viscosity) vs. continuous $P_1$ element (entropy viscosity).}
\label{test1}
    \resizebox{0.65\textwidth}{!}{
\begin{tabular}{ccccccc}
\toprule
     & h &0.1& 0.05 & 0.025 & 0.0125 &0.00625\\
\midrule
\multirow{2}{7em}{Greedy visc.} & $L^2$ & 6.94E-02 & 9.14E-03 & 1.36E-03 & 2.48E-04 & 4.86E-05\\
& Rate & -- & 2.92 & 2.75 & 2.45 &2.35 \\
Reconstruction & $L^2$ & 7.17E-02 & 8.77E-03 & 1.44E-03 & 2.79E-04 & 5.87E-05\\
& Rate & -- & 3.03 & 2.61 & 2.36 &2.25 \\
\midrule
\multirow{2}{7em}{Loc. FCT visc.} & $L^2$ & 4.34E-02 & 7.34E-03 & 1.01E-03 & 1.98E-04 & 4.13E-05\\
& Rate & -- & 2.56 & 2.86 & 2.35 &2.26 \\
Reconstruction & $L^2$ & 4.61E-02 & 7.44E-03 & 1.20E-03 & 2.51E-04 & 5.43E-05\\
& Rate & -- & 2.63 & 2.63 & 2.26 &2.21 \\
\midrule
\multirow{2}{7em}{Glob. FCT visc.} & $L^2$ & 2.03E-02 & 5.49E-03 & 7.04E-04 & 1.50E-04 & 3.34E-05\\
& Rate & -- & 1.89 & 2.96 & 2.23 &2.17 \\
Reconstruction & $L^2$ &2.45E-02 & 6.20E-03 & 9.91E-04 & 2.19E-04 & 4.90E-05\\
& Rate & -- & 1.98 & 2.65 & 2.18 &2.16 \\
\midrule
\multirow{2}{7em}{continuous ${P}_1$ entropy visc.} & $L^2$ & 1.48E-01 & 4.86E-02 & 1.09E-02 & 1.73E-03 & 2.50E-04\\
& Rate & -- & 1.67 & 2.20 & 2.68 &2.81 \\
\bottomrule
\end{tabular}
}
\end{table}

\subsection{Swirling deformation flow}
We consider the domain $\Omega = [0,1]^2$ with the time-dependent swirling velocity field
\begin{equation} \vec{\beta}(\bm{x},t):=\begin{pmatrix}
	-2\sin(\pi y)\cos(\pi y)\sin^2(\pi x)\cos(\pi t)\\
	2\sin(\pi x)\cos(\pi x)\sin^2(\pi y)\cos(\pi t)
\end{pmatrix},\end{equation}
which represents a swirling deformation flow \cite{MR3738312}.
The initial condition is
\begin{equation}u_0(\bm{x})=\sin(2\pi x)\sin(2\pi y),\end{equation}
and the motion is periodic with period 2. The solution is smooth in both space and time, returning to the initial data at every half period, i.e., $u(\bm{x}, k) = u_0(\bm{x})$ for all $k\in\mathbb{N}$.

We test five meshsizes $h = 0.05, 0.025, 0.0125,0.00625$ and $0.003125$, with the results summarized in \Cref{test2}. The method with global FCT viscosity achieves second-order accuracy, and the errors on the $h$-meshes are nearly identical to those from the continuous  ${P}_1$ element with global FCT viscosity \cite{MR3738312} on the $\frac{h}2$-meshes. The methods with greedy viscosity and local FCT viscosity are slightly worse than the above two methods, but they still have convergence rates more than $1.5$. This is likely due to the local maximum principle enforced by these methods, which prevents the solution from approaching the high-order interpolant too closely.
\begin{table}[htbp]
    \centering
	\caption{Swirling deformation flow,  convergence rates:  CR element (greedy/local FCT/global FCT viscosity) vs. continuous $P_1$ element (FCT viscosity).}
\label{test2}
    \resizebox{0.65\textwidth}{!}{
\begin{tabular}{ccccccc}
\toprule
     & h  & 0.05 & 0.025 &0.0125 &0.00625& 0.003125\\
\midrule
\multirow{2}{7em}{Greedy visc.} & $L^2$ & 7.37E-02 & 2.00E-02 & 5.27E-03 & 1.59E-03 & 5.33E-04\\
& Rate & -- & 1.88 & 1.93 & 1.73 &1.58 \\
Reconstruction & $L^2$ & 6.67E-02 & 1.70E-02 & 3.85E-03 & 9.68E-04 & 2.90E-04\\
& Rate & -- & 1.97 & 2.14 & 1.99 &1.74 \\
\midrule
\multirow{2}{7em}{Loc. FCT visc.} & $L^2$ & 4.80E-02 & 1.37E-02 & 4.36E-03 & 1.49E-03 & 5.23E-04\\
& Rate & -- & 1.81 & 1.66 & 1.55 & 1.51 \\
Reconstruction & $L^2$ & 3.88E-02 & 9.48E-03 & 2.55E-03 & 8.02E-04 & 2.73E-04\\
& Rate & -- & 2.03 & 1.90 & 1.67 &1.55 \\
\midrule
\multirow{2}{7em}{Glob. FCT visc.} & $L^2$ & 4.65E-03 & 1.07E-03 & 2.58E-04 & 6.40E-05 & 1.60E-05\\
& Rate & -- & 2.12 & 2.05 & 2.01 &2.00 \\
Reconstruction & $L^2$ &6.55E-03 & 1.55E-03 & 3.83E-04 & 9.58E-05 & 2.40E-05\\
& Rate & -- & 2.08 & 2.01 & 2.00 &2.00 \\
\midrule
\multirow{2}{7em}{continuous ${P}_1$ FCT visc.} & $L^2$ & 2.00E-02 & 4.94E-03 & 1.13E-03 & 2.60E-04 & 6.16E-05\\
& Rate & -- & 2.08 & 2.16 & 2.13 &2.18 \\
\bottomrule
\end{tabular}
}
\end{table}

\subsection{Inflow boundary case}
We consider the domain $\Omega = [0,1]^2$ with a uniform velocity field $\vec{\beta}=(1,1)^T$ and the initial condition $u_0(x,y)=\sin(\pi(x+y))$. The exact solution is given by
\begin{equation}u(x,y;t) = \sin(\pi(x+y-2t)).\end{equation}

This test involves an inflow boundary, for which we impose Dirichlet conditions on the entire boundary for simplicity. We test five mesh sizes $h = 0.05, 0.025, 0.0125$, $0.00625$ and $0.003125$, and the results are summarized in \Cref{test4}. The CR element methods with greedy and local FCT viscosities achieve only first-order accuracy, reflecting the challenge of handling inflow boundaries. In contrast, the global FCT viscosity method retains second-order accuracy, thanks to the specially designed treatment described in \Cref{sec:Inflow} for the nonzero term $L$ in (\ref{forward_inflow}). This demonstrates the robustness of the global FCT approach in preserving high-order accuracy even in the presence of inflow boundaries.
\begin{table}[!t]
    \centering
\caption{Inflow boundary case, convergence rates of CR element with greedy, local FCT and global FCT viscosities.}
\label{test4}
    \resizebox{0.65\textwidth}{!}{
\begin{tabular}{ccccccc}
\toprule
     & h & 0.05 & 0.025 &0.0125 &0.00625& 0.003125\\
\midrule
\multirow{2}{7em}{Greedy visc.} & $L^2$ & 7.17E-02 & 3.39E-02 & 1.61E-02 & 7.76E-03 & 3.80E-03\\
& Rate & -- & 1.08 & 1.08 & 1.05 &1.03 \\
Reconstruction & $L^2$ & 6.71E-02 & 3.21E-02 & 1.55E-02 & 7.59E-03 & 3.75E-03\\
& Rate & -- & 1.06 & 1.05 & 1.03 &1.02 \\
\midrule
\multirow{2}{7em}{Loc. FCT visc.} & $L^2$ & 4.06E-02 & 1.66E-02 & 6.92E-03 & 3.10E-03 & 1.44E-03\\
& Rate & -- & 1.29 & 1.26 & 1.16 & 1.11 \\
Reconstruction & $L^2$ & 3.87E-02 & 1.58E-02 & 6.73E-03 & 3.04E-03 & 1.42E-03\\
& Rate & -- & 1.29 & 1.24 & 1.15 &1.09 \\
\midrule
\multirow{2}{7em}{Glob. FCT visc.} & $L^2$ & 2.93E-03 & 7.40E-04 & 1.86E-04 & 4.66E-05 & 1.17E-05\\
& Rate & -- & 1.99 & 1.99 & 2.00 &2.00 \\
Reconstruction & $L^2$ &4.64E-03 & 1.19E-03 & 3.00E-04 & 7.55E-05 & 1.89E-05\\
& Rate & -- & 1.96 & 1.98 & 1.99 &2.00 \\
\bottomrule
\end{tabular}
}
\end{table}

\subsection{Solid body rotation}
Let the domain be $\Omega = [-1,1]^2$,  the velocity field be  $\vec{\beta}=2\pi(-y,x)^T$, the initial condition be
\begin{equation}
	u_0(\mathbf{x}) = 
\begin{cases} 
1 & \text{if $\|\mathbf{x} - \mathbf{x}_d\| \leq r_0$ and $(|x| \geq 0.05$ or $y \geq 0.7$),} \\
1 - \frac{\|\mathbf{x} - \mathbf{x}_c\|}{r_0} & \text{if $\|\mathbf{x} - \mathbf{x}_c\| \leq r_0$}, \\
g(\|\mathbf{x} - \mathbf{x}_h\|) & \text{if $\|\mathbf{x} - \mathbf{x}_h\| \leq r_0$}, \\
0 & \text{otherwise},
\end{cases}
\end{equation}
where $r_0=0.3$, $g(r)=\frac14(1+\cos(\pi\min(\frac{r}{r_0},1)))$,
$\mathbf{x}_d=(0,0.5)$, $\mathbf{x}_c=(0,-0.5)$, $\mathbf{x}_h=(-0.5,0)$.
The exact solution is
\begin{equation}u(t,x,y)=u_0(x\cos(2\pi t)+y\sin(2\pi t),-x\sin(2\pi t)+y\cos(2\pi t)).\end{equation}
The shape described by $u_0$ is composed of three solids: a slotted cylinder with height 1, a smooth hump with height $\frac12$, and a cone with height 1 \cite{MR1388492}.

We test five mesh sizes $h = 0.1, 0.05, 0.025, 0.0125,$ and $0.00625$, with the results summarized in \Cref{test3}. The first three methods have similar behaviour and exhibit convergence rates higher than $0.4$. Again the errors of the first three methods on the $h$-meshes are comparable to those from continuous ${P}_1$ with entropy viscosity \cite{MR3171280} on the $\frac{h}2$-meshes. \Cref{fig:mesh_results} presents the reconstructed solutions using the CR element method with global FCT viscosity on meshes of decreasing size $h=0.05,0.025,0.0125$. As the mesh is refined, the solution shows improved shape preservation and satisfies the maximum principle throughout the domain. \Cref{fig:two_images}  compares the solutions of the continuous $P_1$ and CR methods, both employing  global FCT viscosity on a mesh with $h=0.0125$. The results demonstrate that the CR method more accurately captures the shapes of both the slotted cylinder and the cone.
\begin{table}[htbp]
    \centering
	\caption{Solid body rotation, convergence rates:  CR element (greedy/local FCT/global FCT viscosity) vs. continuous $P_1$ element (entropy viscosity).}
\label{test3}
    \resizebox{0.65\textwidth}{!}{
\begin{tabular}{ccccccc}
\toprule
     & h &0.1& 0.05 & 0.025 & 0.0125 &0.00625\\
\midrule
\multirow{2}{7em}{Greedy visc.} & $L^2$ & 3.46E-01 & 2.32E-01 & 1.45E-01 & 1.02E-01 & 7.73E-02\\
& Rate & -- & 0.57 & 0.68 & 0.50 &0.41 \\
Reconstruction & $L^2$ & 3.46E-01 & 2.35E-01 & 1.47E-01 & 1.04E-01 & 7.81E-02\\
& Rate & -- & 0.56 & 0.68 & 0.50 & 0.41 \\
\midrule
\multirow{2}{7em}{Loc. FCT visc.} & $L^2$ & 3.00E-01 & 2.12E-01 & 1.39E-01 & 1.00E-01 & 7.58E-02\\
& Rate & -- & 0.50 & 0.61 & 0.47 & 0.41 \\
Reconstruction & $L^2$ &3.02E-01 & 2.15E-01 & 1.41E-01 & 1.02E-01 & 7.66E-02\\
& Rate & -- & 0.49 & 0.61 & 0.48 & 0.41 \\
\midrule
\multirow{2}{7em}{Glob. FCT visc.} & $L^2$ & 2.69E-01 & 1.93E-01 & 1.34E-01 & 9.98E-02 & 7.44E-02\\
& Rate & -- & 0.48 & 0.53 & 0.42 & 0.42 \\
Reconstruction & $L^2$ &2.71E-01 & 1.96E-01 & 1.36E-01 & 1.01E-01 & 7.51E-02\\
& Rate & -- & 0.47 & 0.53 & 0.43 & 0.42 \\
\midrule
\multirow{2}{7em}{continuous ${P}_1$ entropy visc.} & $L^2$ & 3.47E-01 & 2.54E-01 & 1.96E-01 & 1.48E-01 & 1.14E-01\\
& Rate & -- & 0.47 & 0.38 & 0.41 & 0.38 \\
\bottomrule
\end{tabular}
}
\end{table}

\begin{figure}[!t]
    \centering
    \begin{minipage}[t]{0.32\textwidth}
        \centering
        \includegraphics[width=\textwidth]{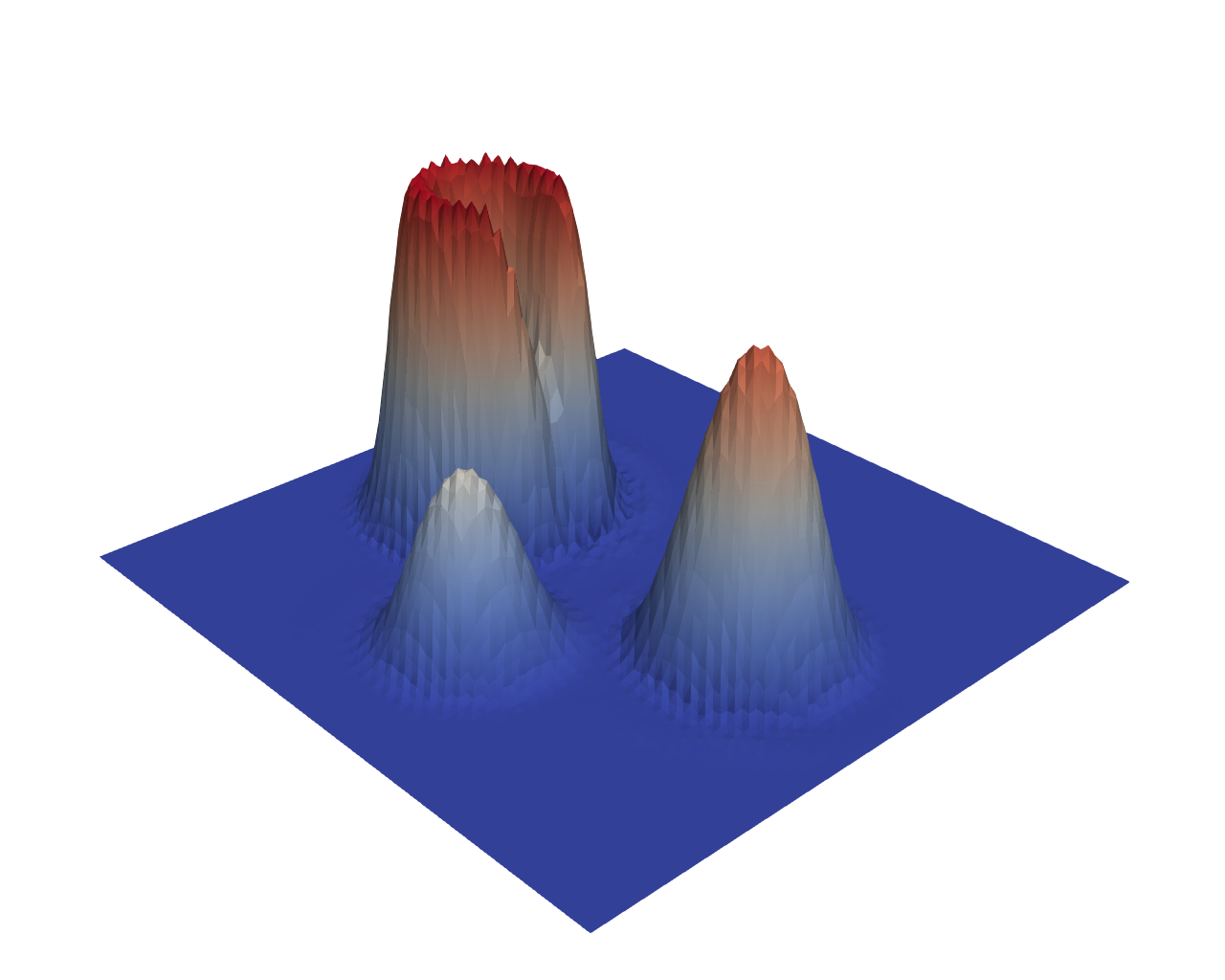}
        \small 
    \end{minipage}
    \hfill
    \begin{minipage}[t]{0.32\textwidth}
        \centering
        \includegraphics[width=\textwidth]{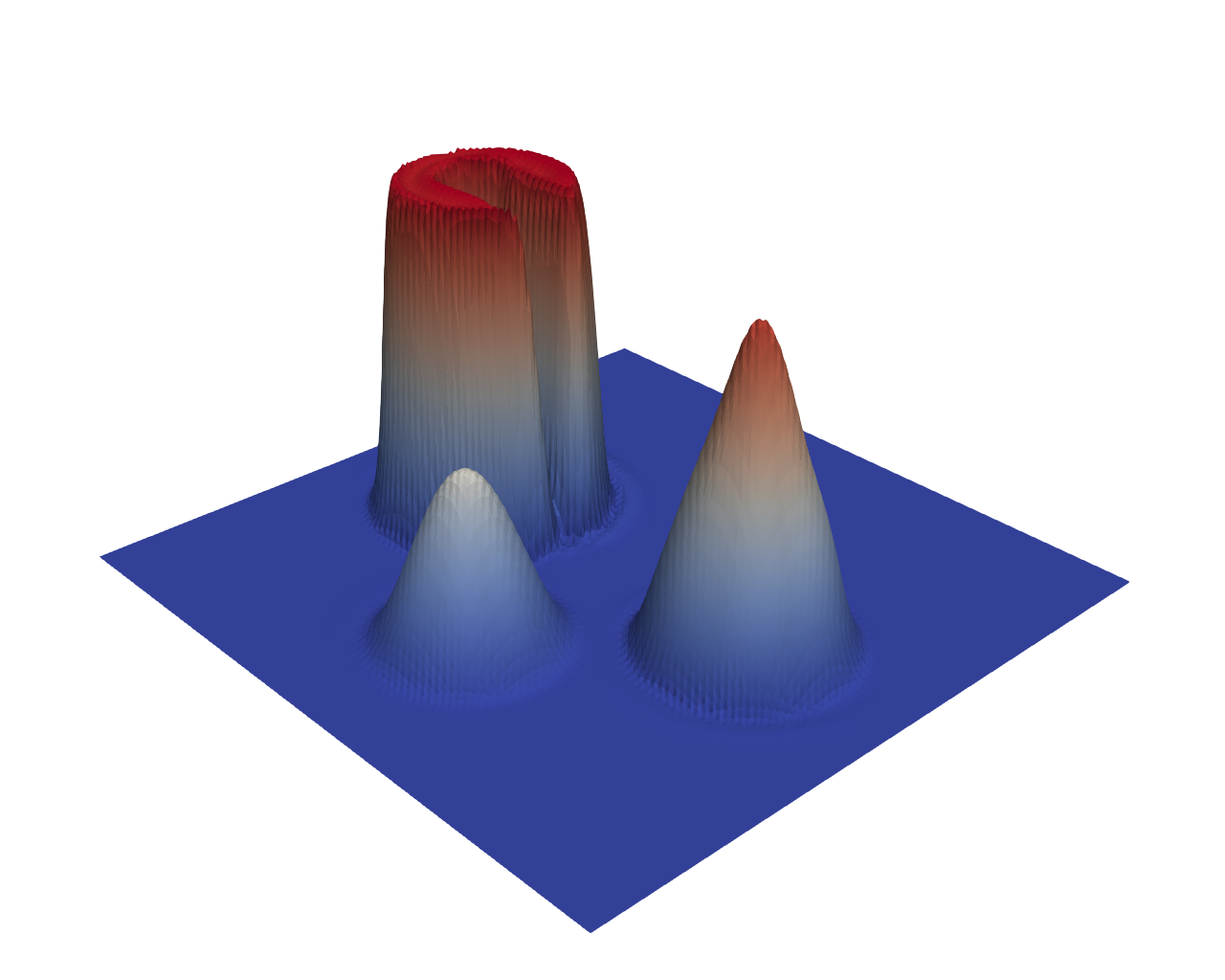}
        \small
    \end{minipage}
    \hfill
    \begin{minipage}[t]{0.32\textwidth}
        \centering
        \includegraphics[width=\textwidth]{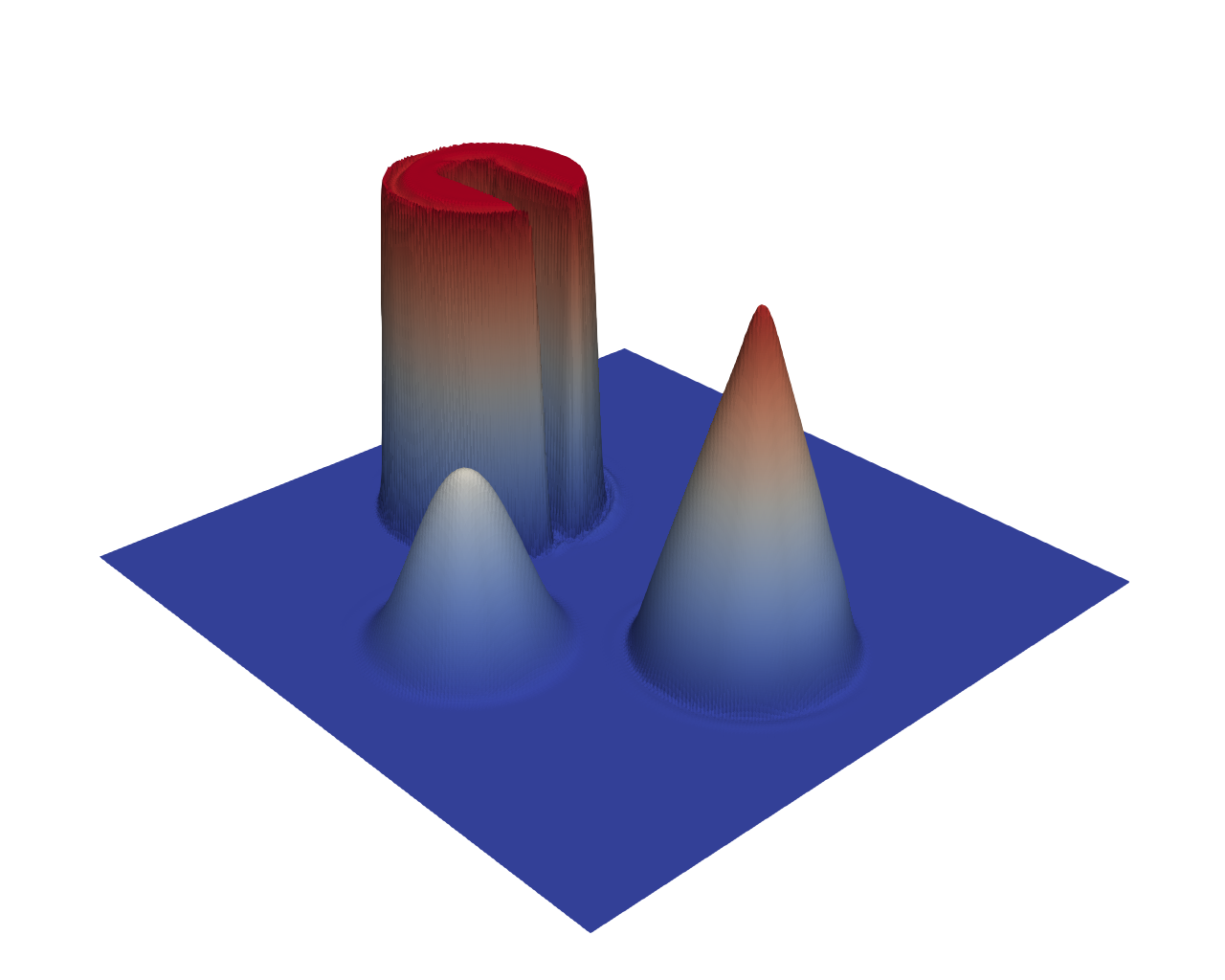}
        \small
    \end{minipage}

    \caption{Solid body rotation at $t=1$: CR method with global FCT viscosity. Mesh sizes $h=0.05,0.025,0.0125$ (left to right). Reconstruction via Wachspress coordinates.}
    \label{fig:mesh_results}
\end{figure}

    

\begin{figure}[!t]
    \centering
    \begin{minipage}[t]{0.49\textwidth}
        \centering
        \includegraphics[width=\textwidth]{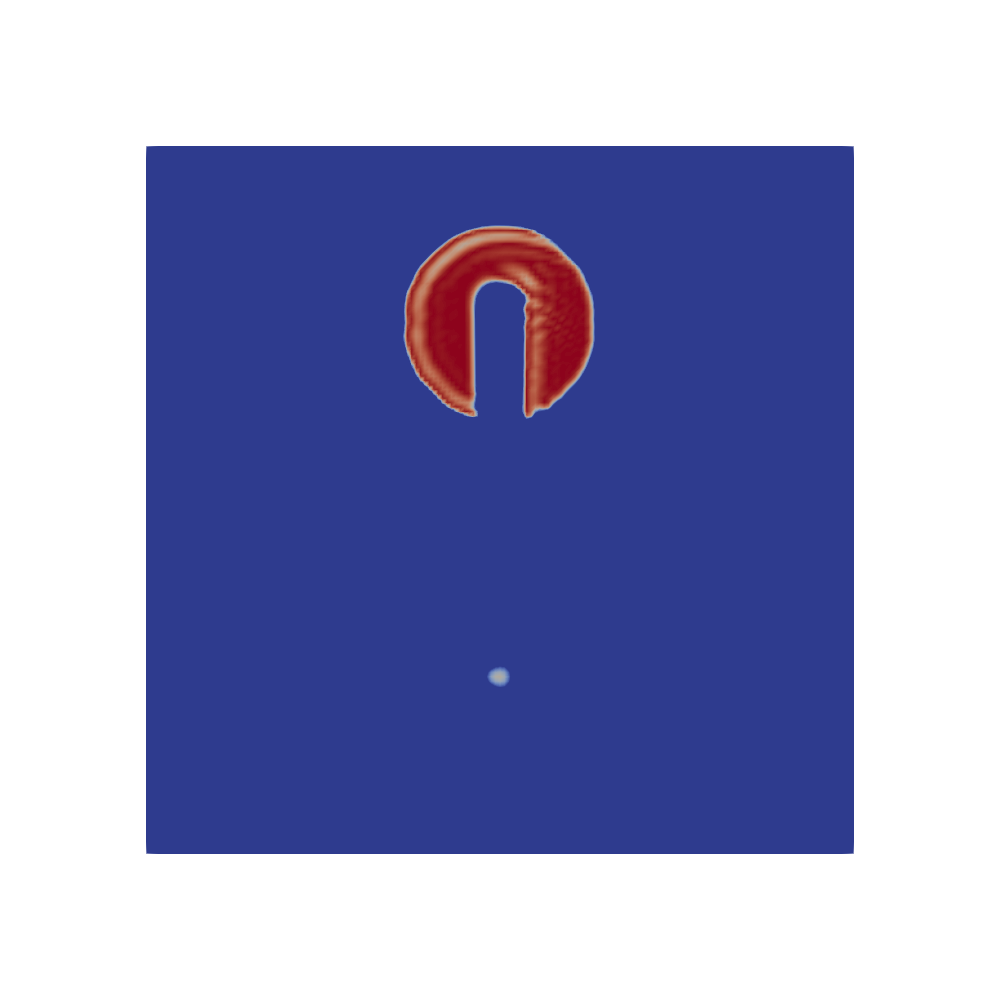}
    \end{minipage}
    \begin{minipage}[t]{0.49\textwidth}
        \centering
        \includegraphics[width=\textwidth]{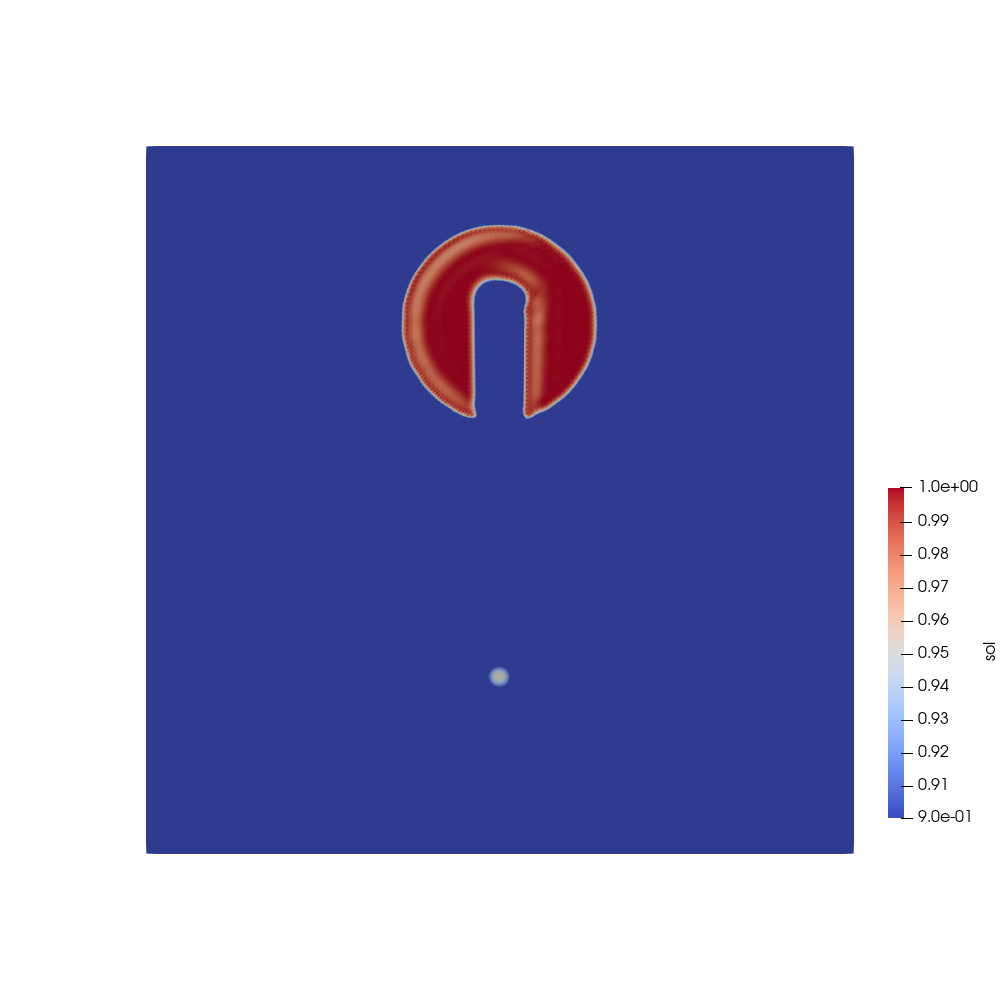}
    \end{minipage}
    
    \caption{Solid body rotation at $t=1$: continuous $P_1$ (left) vs.  CR method (right), both with global FCT viscosity, $h=0.0125$.}
    \label{fig:two_images}
\end{figure}

\subsection{Non-solenoidal compressive rotational case}

Let $\Omega=[0,1]^2$ and now consider a non-solenoidal test with 
velocity field 
\begin{equation}
\boldsymbol{\beta}(x,y)=
\begin{pmatrix}
\sigma(x-c_x)-\omega(y-c_y)\\
\omega(x-c_x)+\sigma(y-c_y)
\end{pmatrix},
\end{equation}
where $\sigma=-0.6$, $\omega=4$, and $(c_x,c_y)=(0.5,0.5)$. Since
\[
\nabla\cdot\boldsymbol{\beta}=2\sigma=-1.2\le 0,
\]
the flow is non-solenoidal and consists of a rotation combined with a uniform compression toward the center of the domain.
The initial condition is
\begin{equation}
u_0(x,y)=
\exp\!\left(-120\bigl((x-0.72)^2+(y-0.50)^2\bigr)\right)\cos(10\pi x).
\end{equation}
This corresponds to a localized oscillatory wave packet. The exact solution is obtained by the method of characteristics:
\begin{equation}
u(x,y,t)=u_0(x_0,y_0),
\end{equation}
where
\begin{equation}
	\begin{cases}
		x_0=0.5+e^{-\sigma t}\Big(\cos(\omega t)(x-c_x)+\sin(\omega t)(y-c_y)\Big),\\
		y_0=0.5+e^{-\sigma t}\Big(-\sin(4t)(x-c_x)+\cos(\omega t)(y-c_y)\Big).
	\end{cases}
\end{equation}

This test is more challenging than standard solenoidal transport benchmarks, as the wave packet experiences progressive compression while being advected along a rotational flow. Its localized Gaussian envelope combined with high-frequency oscillations makes the solution particularly sensitive to numerical dissipation and dispersion. Five mesh sizes, $h = 0.05, 0.025, 0.0125, 0.00625, 0.003125$, are tested at $t=0.5$ with CFL $0.1$ (\Cref{test5}). Both the CR solution and its reconstruction achieve second-order convergence in $L^2$, with consistently smaller errors than the continuous $P_1$ method, while also attaining higher convergence rates in $L^\infty$. Long-time simulations up to $t=2$ on $N=80$ (see \Cref{nonsolenoid}) show that the CR solution remains well-behaved, whereas the continuous $P_1$ method exhibits more pronounced dispersion errors.

    \begin{table}[!t]
        \centering
        \caption{Non-solenoidal compressive rotational test, convergence rates:  CR element vs. continuous $P_1$ element, both with global FCT viscosity.}
        \label{test5}
        \resizebox{0.78\textwidth}{!}{
        \begin{tabular}{cccccccc}
        \toprule
        & $h$ & 0.05 & 0.025 & 0.0125 & 0.00625&0.003125 \\
        \midrule
        \multirow{4}{7em}{CR}
& $L^2$ & 5.30E-02 & 2.27E-02 & 3.74E-03 & 7.10E-04 & 1.38E-04 \\
& Rate & -- & 1.22 & 2.60 & 2.40 & 2.37 \\
& $L^{\infty}$ & 8.38E-01 & 4.70E-01 & 1.44E-01 & 4.52E-02 & 1.49E-02 \\
& Rate & -- & 0.83 & 1.71 & 1.67 & 1.60 \\
        \cmidrule{2-7}
        \multirow{4}{7em}{Reconstruction}
& $L^2$ & 5.33E-02 & 2.36E-02 & 3.78E-03 & 7.12E-04 & 1.41E-04 \\
& Rate & -- & 1.18 & 2.64 & 2.41 & 2.34 \\
& $L^{\infty}$ & 8.58E-01 & 4.70E-01 & 1.44E-01 & 4.52E-02 & 1.49E-02 \\
& Rate & -- & 0.87 & 1.71 & 1.67 & 1.60 \\
        \midrule
        \multirow{4}{7em}{continuous ${P}_1$}
& $L^2$ & 5.65E-02 & 3.35E-02 & 8.45E-03 & 2.00E-03 & 3.94E-04 \\
& Rate & -- & 0.76 & 1.99 & 2.08 & 2.34 \\
& $L^{\infty}$ & 8.55E-01 & 5.83E-01 & 2.23E-01 & 8.53E-02 & 2.84E-02 \\
& Rate & -- & 0.55 & 1.39 & 1.38 & 1.58 \\
        \bottomrule
        \end{tabular}
        }
        \end{table}

\begin{figure}[!t]
    \centering
    \includegraphics[width=\textwidth]{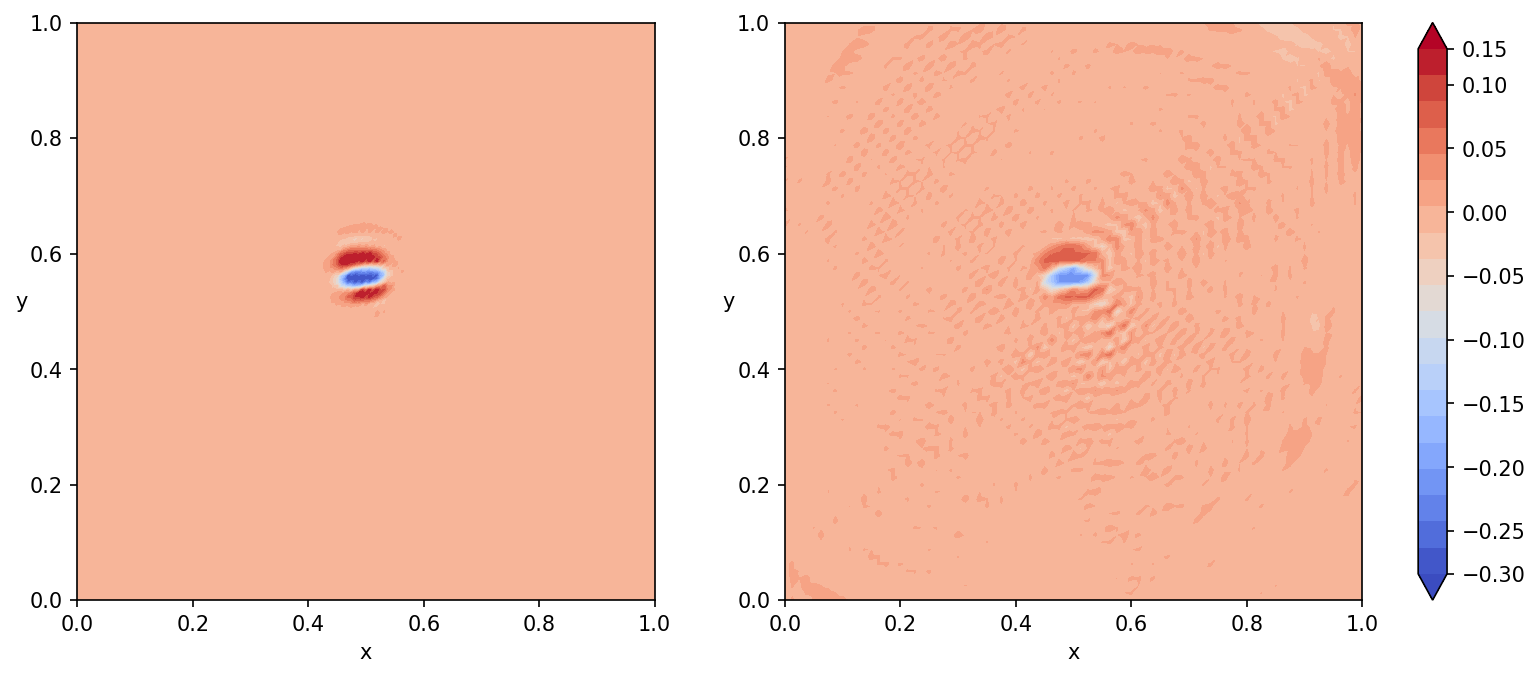}

    \caption{Numerical solutions for the non-solenoidal compressive rotational test at $t=2$. The left and right panels show the CR and continuous $P_1$ solutions, respectively, both with global FCT viscosity on the mesh with $N=80$. The CR method produces a more robust solution with less dispersion compared to the continuous $P_1$ method.}
    \label{nonsolenoid}
\end{figure}
\section{Conclusions}
\label{sec:conclusions}

In this paper, we showed that the Crouzeix-Raviart element provides an effective framework for constructing explicit, second-order, maximum-principle-preserving schemes for time-dependent transport equation. The proposed viscosity mechanisms enforce a local discrete maximum principle at the degree-of-freedom level, while the Wachspress-based reconstruction extends the bound-preserving property to the whole domain. In the divergence-free case, the resulting schemes are also conservative.
The numerical results, including smooth and discontinuous test cases and examples with solenoidal and non-solenoidal velocity fields, demonstrate that the proposed methods are robust and accurate.
The forthcoming sequel will design CR methods for scalar conservation laws, developing appropriate counterparts to the techniques introduced herein.

\bibliographystyle{els-cas-templates/cas-model2-names}
\bibliography{references}
\end{document}

%% file: ex_shared.tex
\usepackage{mathtools}
\usepackage{amsthm}
\usepackage{epstopdf}
\usepackage[nameinlink,noabbrev]{cleveref}
\usepackage{ifpdf}
\ifpdf
  \DeclareGraphicsExtensions{.eps,.pdf,.png,.jpg}
\else
  \DeclareGraphicsExtensions{.eps}
\fi

\numberwithin{equation}{section}

\newtheorem{theorem}{Theorem}[section]
\newtheorem{lemma}[theorem]{Lemma}

\theoremstyle{remark}
\newtheorem{remark}[theorem]{Remark}

\crefname{hypothesis}{Hypothesis}{Hypotheses}

%% file: commands.tex
\newcommand{\p}{\partial}
\newcommand{\R}{\mathbb{R}}

\newcommand{\norm}[1]{\|#1\|}